\newtheorem{theorem}{Theorem}[section]
\newtheorem{lemma}{Lemma}[section]
\newtheorem{corollary}{Corollary}[section]
\newtheorem{remark}{Remark}[section]
\numberwithin{equation}{section}
\newcommand{\eps}{{\varepsilon}}
\newcommand{\ds}{\displaystyle}
\numberwithin{equation}{section}
\numberwithin{equation}{section}
\newcommand{\R}{{\mathbb R}}
\newcommand{\Sg}{\mathbb{S}_+^{n}}
\newcommand{\G}{G^n_{j,\tau_j}}
\newcommand{\im}[1]{\displaystyle\int_{M}{#1}\,{\rm d}v_g}
\newcommand{\ig}[1]{\displaystyle\int_{\Sg}{#1}\,{\rm d}v_g}
\newcommand{\igo}[1]{\displaystyle\int_{\Sg}{#1}\,{\rm d}v_{g_0}}
\renewcommand*{\@fnsymbol}[1]{\ensuremath{\ifcase#1\or *\or **\or \ddagger\or
   \mathsection\or \mathparagraph\or \|\or **\or \dagger\dagger
   \or \ddagger\ddagger \else\@ctrerr\fi}}
\g@addto@macro{\endabstract}{\@setabstract}
\newcommand{\authorfootnotes}{\renewcommand\thefootnote{\@fnsymbol\c@footnote}}%
\def\d{{{\rm d}%
}v_{g}}
\renewcommand{\@biblabel}[1]{[#1]\hfill}
\title[Multipolar Hardy inequality on Riemannian manifolds]{Multipolar Hardy inequalities on Riemannian manifolds}
\author{Francesca Faraci}
\address{Department of Mathematics and Informatics, University of Catania}
\email{ffaraci@dmi.unict.it}
\author{Csaba Farkas}
\address{Department of Mathematics and Informatics, Sapientia University, Tg. Mure\c s, Romania, \newline Institute of Applied Mathematics,
	\'Obuda University, 1034 Budapest, Hungary}
\email{farkas.csaba2008@gmail.com}
\author{Alexandru Krist\'aly}
\address{\noindent Department of Economics, Babe\c s-Bolyai University, Cluj-Napoca, Romania, \newline \noindent Institute of Applied Mathematics,
	\'Obuda University, 1034 Budapest, Hungary}
\email{alexandrukristaly@yahoo.com; alex.kristaly@econ.ubbcluj.ro}
\begin{document}
\maketitle

\begin{center}
	{\it Dedicated to 
		Professor Enrique Zuazua on the occasion of his 55th birthday}
\end{center}

\begin{abstract}
We prove multipolar Hardy inequalities on  complete Riemannian mani\-folds, providing various curved counterparts of some Euclidean multipolar inequalities due to Cazacu and Zuazua [Improved multipolar Hardy inequalities, 2013]. We notice that our inequalities deeply depend on the curvature, providing (quantitative) information about the deflection from the flat case. 
By using these inequalities together with variational methods and group-theoretical arguments, we also establish non-existence,  existence and multiplicity results for certain Schr\"odinger-type  problems involving the Laplace-Beltrami operator and bipolar potentials on Cartan-Hada\-mard manifolds and on the open upper hemisphere, respectively. 
\end{abstract}
%\tableofcontents

\maketitle

%\vspace{0.5cm}
\section{Introduction}

The classical {\it unipolar Hardy inequality} (or, uncertainty principle) states that if  $n\ge 3$, then 
$$
\int_{\mathbb R^n}|\nabla u|^2{\rm d}x \geq \frac{(n-2)^2}{4}\int_{\mathbb
	R^n}\frac{u^2}{|x|^2}{\rm d}x,\ \forall u\in C_0^\infty(\mathbb R^n);
$$
 here, the constant $\frac{(n-2)^2}{4}$ is sharp and not achieved. 
 Many efforts have been made over the last two decades 
to improve/extend Hardy inequalities in various directions. 
One of the most challenging research topics in this direction  is the so-called  {\it multipolar Hardy inequality}. 
%- motivated by physical phenomena (as the non-relativistic molecular physics,   quantum cosmology, linearization of combustion models), - 
Such kind of extension is motivated by molecular physics and quantum chemistry/cosmology. Indeed, by describing the  behavior of electrons and atomic nuclei in a molecule within the theory of Born-Oppenheimer approximation or Thomas-Fermi theory,  particles can be  modeled as certain singularities/poles $x_1,...,x_m\in \mathbb R^n$,   producing their effect within the form $x\mapsto |x-x_i|^{-1}$, $i\in \{1,...,m\}$. Having such mathematical models, several authors studied the behavior of the operator with inverse square potentials with multiple poles, namely $$\ds \mathscr{L}:=-\Delta-\sum_{i=1}^m\frac{\mu_i^+}{|x-x_i|^2},$$
see
Bosi, Dolbeaut and Esteban \cite{Bosi-Dolbeaut-Esteban-CPAA-2008},  
Cao and Han \cite{Cao-Han}, Felli, Marchini and  Terracini \cite{Felli-Marchini-Terracini-2007JFA}, Guo, Han and Niu \cite{Kinaiak},  Lieb \cite{Lieb}, 
Adimurthi  \cite{Adimurthi-2013-CCM}, and references therein.
Very recently, Cazacu and Zuazua \cite{cazacuzuazua} proved  an optimal multipolar counterpart of the above (unipolar) Hardy inequality, i.e., 
\begin{equation}\label{MHIEUC}\int_{\mathbb{R}^n}|\nabla u|^2{\rm d}x\geq\frac{(n-2)^2}{m^2}\sum_{1\leq i<j\leq m}\int_{\mathbb{R}^n}\frac{|x_i-x_j|^2}{|x-x_i|^2|x-x_j|^2}u^2{\rm d}x,\ \ \forall u\in C_0^\infty(\mathbb R^n),
\end{equation}
where $n\geq 3$, and $x_1,...,x_m\in \mathbb R^n$ are different poles; moreover, the constant $\frac{(n-2)^2}{m^2}$ is optimal. By using the paralelogrammoid law, (\ref{MHIEUC}) turns to be equivalent to
\begin{equation}\label{paramultipolar}
\int_{\mathbb{R}^n}|\nabla u|^2{\rm d}x\geq\frac{(n-2)^2}{m^2}\sum_{1\leq i<j\leq m}\int_{\mathbb{R}^n}\left|\frac{x-x_i}{|x-x_i|^2}-\frac{x-x_j}{|x-x_j|^2}\right|^2u^2{\rm d}x,\ \ \forall u\in C_0^{\infty}(\mathbb{R}^n).
\end{equation}  

All of the aforementioned works considered the flat/isotropic setting where no external force is present. Once the ambient space structure is perturbed, coming for instance by a magnetic or gravitational field, the above results do not provide a full description of the physical phenomenon  due to the presence of the {\it curvature}. 

In order to discuss such a curved setting, we put ourselves into the Riemannian realm, i.e., we consider an $n(\geq 3)$-dimensional complete
Riemannian manifold  $(M, g)$,   $d_g : M \times M \to [0, \infty)$ is its usual distance function associated to the Riemannian metric $g$, 
${\rm d}v_g$ is its canonical volume element, $\exp_x:T_xM\to M$ is its standard exponential map, 
and  $\nabla_g u(x)$ is the gradient of a function $u:M\to \mathbb R$ at $x\in M$, respectively. 
Clearly, in the curved setting of $(M,g)$, the vector $x-x_i$ and distance $|x-x_i|$ should be reformulated into a  geometric context by considering $\exp_{x_i}^{-1}(x)$ and $d_g(x,x_i),$ respectively. Note that  $$\nabla_g d_g(\cdot,y)(x)=-\frac{\exp_x^{-1}(y)}{d_g(x,y)}\text{\ for every }y\in M, \text{ }x\in M\setminus (\{y\}\cup {\rm cut}(y)),$$
where cut$(y)$ denotes the cut-locus of $y$ on $(M,g).$ In this setting, a natural question arises: if $\Omega\subseteq M$ is an open domain and $S=\{x_1,...,x_m\}\subset \Omega$ is the set of distinct poles, can we prove
\begin{equation}\label{natural-MHIEUC}
\int_{\Omega}|\nabla_g u|^2{\rm d}v_g\geq\frac{(n-2)^2}{m^2}\sum_{1\leq i<j\leq m}\int_{\Omega}V_{ij}(x)u^2{\rm d}x,\ \ \forall u\in C_0^\infty(\Omega),
\end{equation}
where $$V_{ij}(x)=\frac{d_g(x_i,x_j)^2}{d_g(x,x_i)^2d_g(x,x_j)^2}\ \ {\rm or}\ \  V_{ij}(x)=\left|\frac{\nabla_g d_g(x,x_i)}{d_g(x,x_i)}-\frac{\nabla_g d_g(x,x_j)}{d_g(x,x_j)}\right|^2?$$
Clearly, in the Euclidean space $\mathbb R^n$, inequality (\ref{natural-MHIEUC}) corresponds to  (\ref{MHIEUC})
and (\ref{paramultipolar}),  for the above choices of $V_{ij}$, respectively. It turns out that the answer deeply depends on the curvature of the Riemannian manifold $(M,g)$. Indeed, if the Ricci curvature verifies Ric$(M,g)\geq c_0(n-1)g$ for some $c_0>0$ (as in the case of the $n$-dimensional unit sphere $\mathbb S^n$), we know by the theorem of Bonnet-Myers that $(M,g)$ is compact; thus, we may use the constant functions $u\equiv c\in \mathbb R$ as test-functions in (\ref{natural-MHIEUC}), and we get a contradiction.  However, when $(M,g)$ is a Cartan-Hadamard manifold (i.e., complete, simply connected Riemannian manifold with non-positive sectional curvature), we can expect the validity of (\ref{natural-MHIEUC}), see Theorems \ref{main-main1} \& \ref{masodik} and suitable Laplace comparison theorems, respectively.

Accordingly, the primary aim of the present paper is to investigate {\it multipolar Hardy inequalities on complete Riemannian manifolds}. We emphasize that such a study requires new  technical and theoretical approaches. In fact, we need to explore those geometric and analytic properties which are behind of the theory of  multipolar Hardy inequalities in the flat context, formulated now  in terms of curvature, geodesics, exponential map, etc. 
%and providing some applications in the theory of elliptic PDEs involving the Laplace-Beltrami operator and bipolar potentials formulated on Cartan-Hadamard manifolds and on the open upper hemisphere. 
We notice that striking results were also achieved  recently in the theory of unipolar Hardy-type  inequalities on {curved spaces}. The pioneering work of Carron \cite{CarronHardy}, who  studied Hardy inequalities on complete  non-compact Riemannian manifolds, opened new perspectives in the study of functional inequalities with singular terms on curved spaces. Further contributions have been provided by D'Ambrosio and Dipierro \cite{dambrosiohardyRiemann}, Kombe and \"Ozaydin \cite{Kombe-Ozaydin-TAMS2019, Kombe-Ozaydin-TAMS2013}, Xia \cite{XIA-JMAA_2014}, and Yang, Su and Kong \cite{Yang-Su-Kong-CCM2014}, where various improvements of the usual Hardy inequality is presented on complete, non-compact Riemannian manifolds. Moreover, certain unipolar Hardy and Rellich type inequalities were obtained on non-reversible Finsler manifolds by Farkas, Krist\'aly and Varga  \cite{FKVCalculus}, and Krist\'aly and Repovs \cite{Kristaly-Repovs-2016-CCM}.

 In the sequel we shall present our results; for further use, 
%The purpose of our paper is to describe improved version of the inequality \eqref{MHIEUC} on non-compact Riemannian manifolds of Cartan-Hadamard type . In order to present the nature of our results, we  need some notations and notions, see \S \ref{fogalmak}. 
let 
$\Delta_g$ be the Laplace-Beltrami operator on $(M,g)$. 
%$B_r(x) = \{y \in M : d_g(x, y) <
%r \}$ be the open geodesic ball with center $x\in M$ and radius $r
%> 0$, 
%Vol$_g(S)$ be the volume of a set $S\subset M$. 
%Let $x_0\in M$ be arbitrarily fixed. It is well known that there exists a neighbourhood $N(x_0)\subset M$ of $x_0$ such that for every point $x\in N(x_0)$ the map $\exp_{x}^{-1}:N(x_0)\to \exp_x^{-1}(N(x_0))\subset T_xM$ is a diffeomorphism; $N(x_0)$ is a \textit{totally normal neighborhood} of $x_0$, see do Carmo \cite[Theorem 3.7]{doCarmo}. 
 Let $m\geq 2$,  $S=\{x_1,...,x_m\}\subset M$ be the set of poles with $x_i\neq x_j$ if $i\neq j,$ and for simplicity of notation,  let $d_i=d_g(\cdot, x_i)$ for every $i\in \{1,...,m\}.$ Our first result reads as follows.

\begin{theorem}[\textbf{Multipolar Hardy inequality I}]\label{main-main1}  Let  $(M,g)$ be an $n$-dimensional complete Riemannian manifold  and $S=\{x_1,...,x_m\}\subset M$ be the set of distinct poles, where $n\geq 3$ and $m\geq 2$.  
%	If $S$ is a subset of a totally normal neighborhood $N(x_0)$ for some $x_0\in M,$ 
	Then 
	\begin{eqnarray}\label{originial-general}
	\int_{M} |\nabla_g u|^2\d&\geq&\frac{(n-2)^2}{m^2}\sum_{1\leq i<j\leq m}\int_{M} \left|\frac{\nabla_g d_i}{d_i}-\frac{\nabla_g d_j}{d_j}\right|^2u^2\d\nonumber\\&&+ \frac{n-2}{m}\sum_{i=1}^{m}\ds\int_{M} \frac{d_i\Delta_g d_i-(n-1)}{d_i^2}u^2\d,\ \ \forall u\in C_0^\infty(M).\end{eqnarray}
	Moreover, in the bipolar case $($i.e., $m=2$$)$, the constant $ \frac{(n-2)^2}{m^2}=\frac{(n-2)^2}{4}$ is optimal in {\rm (\ref{originial-general})}.
%	\begin{enumerate}
%		\item[(i)]  $m=2,$ or
%		\item[(ii)] the curvature of $(M,g)$ is non-positive and
%		the volume of large geodesic balls in $M$ grows at most as  the volume of Euclidean balls, i.e., there exist $\tilde x_0\in M$ and $c_0,r_0>0$ such that for every $r>r_0$, $${\rm Vol}_g(B_r(\tilde x_0))\leq c_0\omega_n r^n,$$
%		%		where $\omega_n$ denotes the volume of the
%		%		unit $n-$dimensional Euclidean ball.
%%		if $(M,g)$ is a Cartan-Hadamard manifold and the volume of large geodesic balls in $M$ is controlled by the volume of Euclidean balls, i.e., there exist $\tilde x_0\in M$ and $c_0,r_0>0$ such that for every $r>r_0$, $${\rm Vol}_g(B_r(\tilde x_0))\leq c_0\omega_n r^n,$$
%%		where $\omega_n$ denotes the volume of the
%%		unit $n-$dimensional Euclidean ball.
%	\end{enumerate}	
\end{theorem}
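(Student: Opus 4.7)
The plan is to adapt the vector-field/divergence technique used by Cazacu and Zuazua to establish (\ref{MHIEUC}) to the curved setting. I would introduce
$$\vec{F}:=-\frac{n-2}{m}\sum_{i=1}^{m}\frac{\nabla_g d_i}{d_i},$$
which is a well-defined vector field off the measure-zero set $S\cup\bigcup_{i=1}^m \mathrm{cut}(x_i)$. Starting from the pointwise inequality $|\nabla_g u - u\vec{F}|^2\geq 0$, expanding, and using $2u\nabla_g u=\nabla_g(u^2)$ followed by an integration by parts, one arrives at the master estimate
$$\int_M |\nabla_g u|^2\,\d \;\geq\; -\int_M u^2\bigl(\mathrm{div}_g \vec{F} + |\vec{F}|^2\bigr)\,\d, \qquad u\in C_0^\infty(M).$$

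The heart of the argument is then an explicit computation of the right-hand side. Since $|\nabla_g d_i|=1$ a.e., a direct calculation yields $\mathrm{div}_g(\nabla_g d_i/d_i)=\Delta_g d_i/d_i - 1/d_i^2$ and hence
$$-\mathrm{div}_g\vec{F} \;=\; \frac{n-2}{m}\sum_{i=1}^m\left(\frac{\Delta_g d_i}{d_i}-\frac{1}{d_i^2}\right).$$
For the quadratic term I would use the parallelogrammoid identity
$$\left|\sum_{i=1}^m\frac{\nabla_g d_i}{d_i}\right|^2 \;=\; m\sum_{i=1}^m \frac{1}{d_i^2}\;-\;\sum_{1\leq i<j\leq m}\left|\frac{\nabla_g d_i}{d_i}-\frac{\nabla_g d_j}{d_j}\right|^2.$$
Adding the two expressions, the combined coefficient of $\sum_{i}1/d_i^2$ collapses to $-\frac{(n-2)(n-1)}{m}$, and one obtains exactly the right-hand side of (\ref{originial-general}).

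The main technical obstacle is justifying the integration by parts, since $\vec{F}$ blows up on $S$ and is only defined off $\bigcup_i\mathrm{cut}(x_i)$. For $u\in C_0^\infty(M)$ I would excise small geodesic balls $B_g(x_i,\varepsilon)$ around the poles and let $\varepsilon\to 0^+$; because $|\vec{F}|=O(\sum_i d_i^{-1})$ and the geodesic spheres $\partial B_g(x_i,\varepsilon)$ have surface measure $O(\varepsilon^{n-1})$, the boundary contributions vanish for $n\geq 3$. The cut-locus difficulty is milder: $\mathrm{cut}(x_i)$ has zero Riemannian measure, and in the Calabi-type distributional sense the singular part of $\Delta_g d_i$ supported there is nonpositive and contributes with the favorable sign for our inequality.

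For optimality in the bipolar case, I would localize near one pole, say $x_1$. On a shrinking geodesic ball around $x_1$ one has $d_1\Delta_g d_1 \to n-1$ and $\left|\nabla_g d_1/d_1 - \nabla_g d_2/d_2\right|^2\sim 1/d_1^2$, so the matter reduces to the classical unipolar Hardy sharpness at $x_1$. Testing (\ref{originial-general}) with the standard extremal family $u_\varepsilon(x)=\eta(x)\,d_1(x)^{-(n-2)/2+\varepsilon}$ for a cutoff $\eta\in C_0^\infty$ supported close to $x_1$, and letting $\varepsilon\to 0^+$, the ratio between the left-hand side and the leading multipolar term on the right converges to $(n-2)^2/4$, establishing sharpness.
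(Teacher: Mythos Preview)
Your derivation of the inequality is the paper's proof in vector-field notation: with $E=\prod_{i} d_i^{2-n}$ one checks $\nabla_g(E^{-1/m})=-E^{-1/m}\vec F$, so the paper's starting point $\int_M|\nabla_g(uE^{-1/m})|^2E^{2/m}\geq 0$ is literally your $\int_M|\nabla_g u-u\vec F|^2\geq 0$, and the subsequent algebra (your parallelogrammoid identity, the divergence computation) matches the paper's ``algebraic reorganization'' line by line.

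For optimality when $m=2$ the paper chooses a different test family: a log-truncated profile supported on thin annuli around \emph{both} poles, namely $u_\varepsilon=\frac{\log(d_i/\varepsilon^2)}{\log(1/\varepsilon)}\,d_i^{(2-n)/2}$ on $A_i[\varepsilon^2,\varepsilon]$ and $u_\varepsilon=\frac{2\log(\sqrt\varepsilon/d_i)}{\log(1/\varepsilon)}\,d_i^{(2-n)/2}$ on $A_i[\varepsilon,\sqrt\varepsilon]$, and shows $\mathcal I_\varepsilon-\tfrac{(n-2)^2}{4}\mathcal J_\varepsilon=O(1)$, $\mathcal K_\varepsilon,\mathcal L_\varepsilon=O(\varepsilon^{1/4})$ with $\mathcal J_\varepsilon\to\infty$. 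Your single-pole family $\eta\,d_1^{-(n-2)/2+\varepsilon}$ also works and is the more classical route; the paper's choice has the minor advantage that $u_\varepsilon$ is bounded and vanishes at the poles, so it is directly admissible without an $H^1$-approximation step.

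One caution: your cut-locus remark has the sign backwards. The nonpositive singular part of $\Delta_g d_i$ enters $-\operatorname{div}_g\vec F$ with the positive coefficient $\tfrac{n-2}{m}\cdot\tfrac{1}{d_i}$, so it \emph{lowers} the rigorously obtained lower bound $-\int u^2(\operatorname{div}_g\vec F+|\vec F|^2)$ relative to its smooth part; this is the unfavorable direction if one wants (\ref{originial-general}) with the pointwise Laplacian. The paper does not discuss this either and works formally off the measure-zero cut loci; for all the applications in the paper (Cartan--Hadamard manifolds have empty cut locus, and on $\mathbb S^n_+$ the cut loci of the poles lie outside the domain) the issue does not arise.
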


%\noindent Some remarks are in order: 

\begin{remark}\rm \label{remark-elso}
	(a) The proof of inequality (\ref{originial-general}) is based on  a direct calculation. If $m=2$, the local behavior of geodesic balls implies the  optimality of the constant $ \frac{(n-2)^2}{m^2}=\frac{(n-2)^2}{4}$; in particular,  the second term is a lower order perturbation of the first one of the RHS (independently of the curvature). 
	
	(b) The optimality of $\frac{(n-2)^2}{m^2}$ seems to be a hard nut to crack.  
	A possible approach could be a fine Agmon-Allegretto-Piepenbrink-type  spectral estimate developed by Devyver \cite{D} and Devyver, Fraas and Pinchover \cite{DFP} whenever $(M,g)$ has 	asymptotically non-negative Ricci curvature (see Pigola, Rigoli and Setti  \cite[Corollary 2.17, p. 44]{Pigola-Rigoli-Setti2008}). Indeed, under this curvature assumption one can prove that the operator $-\Delta_g-W$ is critical (see \cite[Definition 4.3]{DFP}), where 
	$$W=\frac{(n-2)^2}{m^2}\sum_{1\leq i<j\leq m} \left|\frac{\nabla_g d_i}{d_i}-\frac{\nabla_g d_j}{d_j}\right|^2+\frac{n-2}{m}\sum_{i=1}^{m}\ds \frac{d_i\Delta_g d_i-(n-1)}{d_i^2}.$$
	Although expected,  we have no full control on the second summand with respect to the first one in $W$, i.e., the latter term could compete with the 'leading' one; clearly, in the Euclidean setting no such competition is present, thus the optimality of $\frac{(n-2)^2}{m^2}$ immediately follows by the criticality of $W$. It remains to investigate this issue in a forthcoming study. 
	
%	 We notice that the second term in the RHS of (\ref{originial-general}) has not a definite sign, which depends on the curvature of $(M,g)$; see Corollaries \ref{corollary-extension} and \ref{corollary-ezleszajoagombon}  below.  Accordingly, the optimality of $\frac{(n-2)^2}{m^2}$ in (\ref{originial-general}) means 
	
%	(b) If  i.e.,
%	${\rm Ric}_{(M,g)}(x)\geq-(n-1)G(d_g(\tilde x_0,x)),\ \forall x\in M,$ where $G\in C^1([0,\infty))$ is a non-negative function satisfying $\ds\int_0^{\infty}tG(t){\rm d}t=b_0<+\infty,$ then the volume of large geodesic balls in $M$ is controlled by the volume of Euclidean balls; namely, 
%	$${\rm Vol}_g (B_r(\tilde x_0))\leq e^{(n-1)b_0}{\rm Vol}_g (B_1(\tilde x_0)) r^n  \ \ {\rm for\ all} \ r>1,$$
%	 see  Pigola, Rigoli and Setti  \cite[Corollary 2.17, p. 44]{Pigola-Rigoli-Setti2008}. 
%	
	
(c)  We emphasize that the second term in the RHS of (\ref{originial-general}) has a crucial role. Indeed, on one hand, when the Ricci curvature verifies Ric$(M,g)\geq c_0(n-1)g$ for some $c_0>0$, one has that $d_i(x)=g_d(x,x_i)\leq \pi/\sqrt{c_0}$ for every $x\in M$ and by the Laplace comparison theorem, we have that $d_i\Delta_g d_i-(n-1)\leq (n-1)(\sqrt{c_0}d_i\cot(\sqrt{c_0}d_i)-1)< 0$ for $d_i>0$, i.e. for every $x\neq x_i$. Thus, this term modifies the original problem (\ref{natural-MHIEUC}) by filling the gap in a suitable way. On the other hand, when $(M,g)$ is a Cartan-Hadamard manifold, one has $d_i\Delta_g d_i-(n-1)\geq 0$, and inequality (\ref{originial-general}) implies (\ref{natural-MHIEUC}). This result will be resumed in Corollary \ref{corollary-extension} (i).   In particular, when $M=\R^n$ is the Euclidean space, then $\exp_x(y)=x+y$ for every $x,y\in \mathbb R^n$ and $|x|\Delta |x|=n-1$ for every $x\neq 0$; therefore, Theorem \ref{main-main1} and the criticality of $-\Delta-W$ 
immediately yield the main result of Cazacu and Zuazua \cite{cazacuzuazua}, i.e., inequality (\ref{paramultipolar}) (and equivalently (\ref{MHIEUC})).

%(e) Let $M=\mathbb R^n$ be endowed with the conformal metric $g=g_{ij}(x)=H(x)\delta_{ij}$ $(i,j\in \{1,...n\})$ to the usual Euclidean metric, where $H\in C^2(\mathbb R^n)$ is a positive function. If 
%$$F(F_{ii}+F_{jj})\leq \sum_{k=1}^n F_k^2,\ \forall i,j\in \{1,...,n\},$$
%where $F=H^{-1/2}$ and $F_i=\frac{\partial F}{\partial x_i}$, then $(M,g)$ is a Cartan-Hadamard manifold. Further restrictions on the function $x\mapsto H(x)$ for $|x|$ large enough lead to the verification of the asymptotically nonnegative Ricci curvature of $(M,g)$.  In particular, $H\equiv 1$ trivially verifies the assumptions, covering Cazacu and Zuazua's inequalities in the Euclidean setting. 
\end{remark}

	For further use, we notice that ${\bf K}\geq {c}$ (resp. ${\bf K}\leq {c}$) means that the sectional curvature on $(M,g)$ is
bounded from below (resp. above) by ${c}\in \mathbb R$ at any point and direction.

	For every $c\in \mathbb R$, let ${\bf s}_{c},{\bf
		ct}_{c}:[0,\infty)\to \mathbb
	R$ be defined by
	\begin{equation}\label{kell}
	{\bf s}_{c}(r)=\left\{
	\begin{array}{lll}
	\ds\frac{\sin(\sqrt{c}r)}{\sqrt{c}} & \hbox{if} & {c}>0,\\
	r
	& \hbox{if} &  {c}=0, \\
	\ds\frac{\sinh(\sqrt{-c}r)}{\sqrt{-c}} & \hbox{if} & {c}<0,
	\end{array}\right.\ \ \ {\rm and}\ \ \ {\bf ct}_{c}(r)=\left\{
	\begin{array}{lll}
	\sqrt{c}\cot(\sqrt{c}r) & \hbox{if} & {c}>0,\\
	\frac{1}{r}
	& \hbox{if} &  {c}=0, \\
	\sqrt{-c}\coth(\sqrt{-c}r) & \hbox{if} & {c}<0.
	\end{array}\right.
	\end{equation}

%Indeed, due to the validity of the paralelogrammoid law  on $\mathbb R^n,$
%$$\left|\frac{x-x_i}{|x-x_i|^2}-\frac{x-x_j}{|x-x_j|^2}\right|^2=\frac{|x_i-x_j|^2}{|x-x_i|^2|x-x_j|^2}.$$

Although the paralelogrammoid law in the Euclidean setting provides the equivalence between (\ref{MHIEUC}) and (\ref{paramultipolar}), this property is no longer valid on generic manifolds. However, a curvature-based quantitative  paralelogrammoid law and a Toponogov-type comparison result provide a suitable counterpart of inequality (\ref{MHIEUC}):  

%Let us define the function $\boldsymbol{\rm D}_{c}(r)=r \cdot \boldsymbol{\rm ct}_{c}(r )-1.$ 

\begin{theorem}[\textbf{Multipolar Hardy inequality II}]\label{masodik}
	Let  $(M,g)$ be an $n$-dimensional complete Riemannian manifold with  ${\bf K}\geq k_0$ for some $k_0\in \mathbb R$ and let $S=\{x_1,...,x_m\}\subset M$ be the set of distinct poles belonging to a strictly convex open set $\tilde S\subset M$, where $n\geq 3$ and $m\geq 2$.  
Then  we have the following inequality: 
\begin{align}\label{multipolarcosinegeneral}
\int_{\tilde S} |\nabla_g u|^2\d\geq& \frac{4(n-2)^2}{m^2}\sum_{1\leq i<j\leq m}\int_{\tilde S} \frac{{\bf s}_{k_0}^2\left(\frac{d_{ij}}{2}\right)}{d_id_j{\bf s}_{k_0}(d_{i}){\bf s}_{k_0}(d_{j})}u^2\d+\sum_{1\leq i<j\leq m} \int_{\tilde S} R_{ij}(k_0)u^2\d \nonumber\\&+\frac{n-2}{m}\sum_{i=1}^m\int_{\tilde S} \frac{d_i\Delta_g d_i-(n-1)}{d_i^2}u^2\d,\ \ \forall u\in C_0^\infty(\tilde S),
\end{align}
where $d_{ij}=d_g(x_i,x_j)$ and $${R}_{ij}(k_0)=\left\{
\begin{array}{lll}
\frac{1}{d_i^2}+\frac{1}{d_j^2}-\frac{2}{k_0d_id_j}\left(\frac{1}{\ds {\bf s}_{k_0}(d_i){\bf s}_{k_0}(d_j)}-{\bf ct}_{k_0}(d_i){\bf ct}_{k_0}(d_j)\right), & \hbox{\rm if }& k_0\neq 0,\\
0, & \hbox{\rm if }& k_0=0. 
\end{array}
\right.$$	
\end{theorem}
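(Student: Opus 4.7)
The plan is to bootstrap Theorem \ref{main-main1}: its last summand already matches the last summand of (\ref{multipolarcosinegeneral}), so only the mixed-pair term $\left|\frac{\nabla_g d_i}{d_i}-\frac{\nabla_g d_j}{d_j}\right|^{2}$ needs to be refined. Concretely, I will establish the pointwise estimate
$$\left|\frac{\nabla_g d_i(x)}{d_i(x)}-\frac{\nabla_g d_j(x)}{d_j(x)}\right|^{2}\;\geq\;\frac{4\,{\bf s}_{k_0}^2(d_{ij}/2)}{d_i(x)\,d_j(x)\,{\bf s}_{k_0}(d_i(x))\,{\bf s}_{k_0}(d_j(x))}\;+\;R_{ij}(k_0)(x),$$
for a.e.\ $x\in\tilde S$, and then plug it into Theorem \ref{main-main1} (after multiplication by $(n-2)^2/m^2$ and summation over $1\leq i<j\leq m$) to read off (\ref{multipolarcosinegeneral}).

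For the pointwise estimate, first use $|\nabla_g d_i|\equiv 1$ off cut loci and the identification $\langle\nabla_g d_i(x),\nabla_g d_j(x)\rangle=\cos\gamma(x)$, where $\gamma(x)\in[0,\pi]$ is the angle at $x$ in the geodesic triangle $\triangle x_i x x_j$, to expand $\left|\frac{\nabla_g d_i}{d_i}-\frac{\nabla_g d_j}{d_j}\right|^{2}=\frac{1}{d_i^2}+\frac{1}{d_j^2}-\frac{2\cos\gamma(x)}{d_i d_j}$. Since $\tilde S$ is strictly convex, the sides of $\triangle x_i x x_j$ are the unique minimizing geodesics inside $\tilde S$ and $\gamma(x)$ is well defined; Toponogov's comparison theorem, applicable because ${\bf K}\ge k_0$, then gives $\gamma(x)\ge\gamma^{k_0}(x)$, where $\gamma^{k_0}(x)$ is the corresponding vertex angle in the comparison triangle—having the same three side lengths $d_i(x)$, $d_j(x)$, $d_{ij}$—realized in the simply connected model space of constant curvature $k_0$. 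In particular $\cos\gamma(x)\le\cos\gamma^{k_0}(x)$.

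It remains to rewrite $\frac{1}{d_i^2}+\frac{1}{d_j^2}-\frac{2\cos\gamma^{k_0}}{d_id_j}$ as the claimed right-hand side by pure trigonometry. The spherical/hyperbolic law of cosines in the model space of curvature $k_0\ne 0$ reads
$$k_0\,{\bf s}_{k_0}(d_i){\bf s}_{k_0}(d_j)\cos\gamma^{k_0}={\bf s}_{k_0}(d_{ij}){\bf ct}_{k_0}(d_{ij})-{\bf s}_{k_0}(d_i){\bf s}_{k_0}(d_j){\bf ct}_{k_0}(d_i){\bf ct}_{k_0}(d_j);$$
substituting this and invoking the unified half-angle identity $2\,{\bf s}_{k_0}(r){\bf ct}_{k_0}(r)=2-4k_0\,{\bf s}_{k_0}^2(r/2)$ (valid for both signs of $k_0$) isolates the targeted term $\frac{4{\bf s}_{k_0}^2(d_{ij}/2)}{d_id_j{\bf s}_{k_0}(d_i){\bf s}_{k_0}(d_j)}$, with the algebraic remainder collecting exactly into $R_{ij}(k_0)$. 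The limiting case $k_0=0$ degenerates to the Euclidean parallelogrammoid identity and yields $R_{ij}(0)=0$, consistently with the definition.

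The main obstacle is the clean invocation of Toponogov's theorem: when $k_0>0$ the comparison triangle only exists if the perimeter of $\triangle x_i x x_j$ is below $2\pi/\sqrt{k_0}$ and its sides avoid conjugate points, while for generic $k_0$ one needs $\gamma(x)$ to be unambiguously defined. Both concerns are absorbed into the strict-convexity hypothesis on $\tilde S$, which forces unique minimizing geodesic segments between any pair of points in $\tilde S$ and bounds their lengths by the diameter of $\tilde S$ (implicitly compatible with $k_0$). Once Toponogov is secured, the remainder is routine bookkeeping in the unified trigonometric framework $({\bf s}_{k_0},{\bf ct}_{k_0})$.
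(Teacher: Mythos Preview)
Your proposal is correct and follows essentially the same route as the paper: expand $\left|\frac{\nabla_g d_i}{d_i}-\frac{\nabla_g d_j}{d_j}\right|^2$ via the eikonal equation, bound $\cos\gamma(x)$ from above by the model-space angle via Toponogov (using strict convexity of $\tilde S$ to secure the perimeter condition when $k_0>0$), apply the law of cosines in the model space, and feed the resulting pointwise inequality back into Theorem~\ref{main-main1}. The paper is slightly more explicit about the perimeter bound, invoking results of do~Carmo and Klingenberg to show each side of the geodesic triangle has length $\leq \pi/\sqrt{k_0}$ and that the perimeter is \emph{strictly} less than $2\pi/\sqrt{k_0}$ (ruling out closed geodesics and geodesic biangles via strict convexity), whereas you fold this into a single remark; but the substance is the same.
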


\begin{remark}\rm \label{remark-masik-optimalitas}
When $(M,g)$ is a Cartan-Hadamard manifold and $k_0\leq 0$, one has that  $R_{ij}(k_0)\geq 0$; thus we obtain a similar result as in (\ref{natural-MHIEUC}); the precise statement will be given in Corollary \ref{corollary-extension} (ii).
\end{remark}

\medskip

\medskip

\noindent {\it Applications.} 
As we already noticed, multipolar Hardy inequalities have been applied in the flat case to  guaranty existence and uniqueness of solutions for various elliptic PDEs. If  the particles (e.g. the fermions appearing in the Thomas-Fermi theory, see Lieb \cite{Lieb}) are distributed in a curved space, the aforementioned works cannot be  applied. 
For instance, if some external forces perturb the flat model (present as a magnetic or gravitational field), the curvature will appear. 
Such a typical case occurs in the study of  classical particles in the Lobachevsky hyperbolic model or spherical Riemannian model, described recently by Kudryashov,  Kurochkin,  Ovsiyuk  and Red'kov \cite{oroszok}, and Cari\~nena,  Ra\~nada and Santander \cite{spanyolok}.  

Motivated by the latter investigations on curved frameworks, we consider two model Schr\"odinger-type equations involving  bipolar potentials in two different geometrical settings, namely, in the negatively and positively curved case, where our multipolar Hardy inequalities can be successfully applied:\\

A. {\it Non-positively curved case}. Let $(M,g)$ be an $n(\geq 3)$-dimensional Cartan-Hadamard manifold with ${\bf K}\geq k_0$ for some $k_0\leq 0$, and $S=\{x_1,x_2\}\subset M$ be the set of poles. 
By keeping the previous notations,  we consider the problem 
$$
-\Delta_g u +V(x)u=
\lambda \frac{{\bf s}_{k_0}^2\left(\frac{d_{12}}{2}\right)}{d_1d_2{\bf s}_{k_0}(d_{1}){\bf s}_{k_0}(d_{2})}u+\mu W(x)f(u)\ \   \hbox{ in } M, 
\eqno{(\mathscr{P}_M^\mu)}
$$
where    $V,W:M\to \R$  are positive potentials, $\lambda \in \left[0,{(n-2)^2}\right)$ is fixed, $\mu\geq 0$ is  a parameter, and the continuous function $f:\mathbb R\to \mathbb R$ is sublinear at infinity. In Theorem \ref{theorem-application-Hadamard} we prove that problem $(\mathscr{P}_M^\mu)$ has only the zero solution for small values of $\mu$, while it exists $\mu_0>0$ such that $(\mathscr{P}_M^\mu)$  has two distinct weak solutions in a suitable functional space whenever $\mu\geq \mu_0.$  \\

B. {\it Positively curved case}. If ${\mathbb{S}}_+^{n}$ denotes the open upper hemisphere and $S=\{x_1,x_2\}\subset {\mathbb{S}}_+^{n}$ is the set of poles, we study the Dirichlet problem 
	$$
	\left\{
	\begin{array}{ll}
	\ds-\Delta_{g} u +\mathtt{C}(n,\beta)u=
	\lambda \left|\frac{\nabla_{g} d_1}{d_1}-\frac{\nabla_{g} d_2}{d_2}\right|^2u+|u|^{p-2}u, & \hbox{ in } \mathbb{S}_+^{n} \\
	u=0, & \hbox{ on } \partial\mathbb{S}_+^{n},
	\end{array}
	\right.\eqno{(\mathscr{P}_{\mathbb{S}_+^{n}})}
	$$
	where $g$ is the usual Riemannian structure  on the  unit sphere $\mathbb S^n$ inherited by $\mathbb R^{n+1}$, $\lambda \in \left[0,\frac{(n-2)^2}{4}\right)$ is fixed, $\mathtt{C}(n,\beta)>0$ is given in Corollary \ref{corollary-ezleszajoagombon} and 
	$p\in (2,2^*)$;  hereafter, $2^*=2n/(n-2)$ is the critical Sobolev exponent.   In Theorem \ref{theorem-application-gomb} we prove the existence of infinitely many solutions for $(\mathscr{P}_{\mathbb{S}_+^{n}})$; moreover, by using group-theoretical arguments, we provide qualitative results on the solutions concerning their symmetries whenever  the poles $x_1$ and $x_2$ are in specific positions.

The plan of the paper is as follows. In \S \ref{fogalmak}  we present a series of preparatory definitions and results which are used throughout the paper. In \S \ref{section-2} we prove the multipolar Hardy inequalities, i.e.,  Theorems \ref{main-main1} \& \ref{masodik}.  
%and Corollaries \ref{corollary-extension} \& \ref{corollary-ezleszajoagombon}. 
In \S \ref{section-appl}  we study  problems $(\mathscr{P}_M^\mu)$ and $(\mathscr{P}_{\mathbb{S}_+^{n}})$, while in \S \ref{section-final} we formulate some remarks concerning further questions/perspectives. 

\section{Preliminaries}\label{fogalmak}
Let $(M, g)$ be an $n-$dimensional complete
Riemannian manifold  $(n\geq 3)$. As usual,  $T_x
M$ denotes the tangent space at $x \in M$ and  $\displaystyle TM =
\bigcup_{x\in M}T_xM$ is the tangent bundle. Let $d_g : M \times M
\to [0, \infty)$ be the distance function associated to the
Riemannian metric $g$, and $B_r(x) = \{y \in M : d_g(x, y) <
r \}$ be the open geodesic ball with center $x\in M$ and radius $r
> 0$. If $\d$ is the canonical volume element on $(M, g)$, the
volume of a bounded open set $S \subset M$ is $\mathrm{Vol}_g(S) =
\displaystyle\int_S \d$. 
The behaviour of the volume of small geodesic balls can be expressed as follows, see Gallot,  Hulin  and Lafontaine \cite{GallotHulinLafontain}; for every $x\in M$ we have \begin{equation}\label{kicsigombok}
\mathrm{Vol}_g(B_r(x))=\omega_n r^n\left(1+o(r)\right) \hbox{ as }r \to 0.
\end{equation} 
%If ${\text d}\sigma_g$ denotes
%the $(n-1)-$dimensional Riemannian measure induced on $\partial S$
%by $g$, $\mathrm{Area}_g(\partial S)=\displaystyle\int_{\partial S}
%{\text d}\sigma_g$ denotes the area of $\partial S$ with respect to
%the metric $g$.

Let $u:M\to \mathbb R$ be a function of
class $C^1.$ If $(x^i)$ denotes the local coordinate system on a
coordinate neighbourhood of $x\in M$, and the local components of
the differential of $u$ are denoted by $u_i=\frac{\partial
	u}{\partial
	x_i}$, then the local components of the gradient  $\nabla_g u$ are
$u^i=g^{ij}u_j$. Here, $g^{ij}$ are the local components of
$g^{-1}=(g_{ij})^{-1}$. In particular, for every $x_0\in M$ one has
the eikonal equation
\begin{equation}\label{dist-gradient}
|\nabla_g d_g(x_0,\cdot)|=1\ {\rm  a.e. \ on}\ M.
\end{equation}
In fact, relation (\ref{dist-gradient}) is valid for every point $x\in M$ outside of the cut-locus of $x_0$ (which is a null measure set). 

When no
confusion arises, if $X,Y\in T_x M$, we simply write $|X|$ and
$\langle X,Y\rangle$ instead of the norm $|X|_x$ and inner product
$g_x(X,Y)=\langle X,Y\rangle_x$, respectively.
The $L^p(M)$ norm of
$\nabla_g u(x)\in T_xM$ is given by
$$\|\nabla_g u\|_{L^p(M)}=\left(\displaystyle\int_M
|\nabla_gu|^p{\rm d}v_g\right)^{{1}/{p}}.$$  The space $H^1_g(M)$
is the completion of $C_0^\infty(M)$ with respect to the norm
$$\|u\|_{H^1_g(M)}=\sqrt{\|u\|_{L^2(M)}^2+\|\nabla_g u\|_{L^2(M)}^2}.$$
The Laplace-Beltrami operator is given by $\Delta_g u={\rm
	div}(\nabla_g u)$ whose expression in a local chart of associated
coordinates $(x^i)$ is $\Delta_g u=g^{ij}\left(\frac{\partial^2
	u}{\partial x_i\partial x_j}-\Gamma_{ij}^k\frac{\partial u}{\partial
	x_k}\right),$ where $\Gamma_{ij}^k$ are the coefficients of the
Levi-Civita connection.
%

%For any $c\leq 0,$ let $\displaystyle
%V_{c,n}(r)=n\omega_n\int_0^r {\bf s}_c(t)^{n-1}{\rm d}t$ be
%the volume of the ball with radius $r>0$ in the $n-$dimensional
%space form (i.e., either the hyperbolic space with sectional
%curvature $c$ when $c<0$ or the Euclidean space when $c=0$), where
%${\bf s}_c$ is given in (\ref{kell}).
% Note that for every $x\in M$
%we have
%\begin{equation}  \label{volume-comp-nullaban}
%\lim_{r\to 0^+}\frac{\mathrm{Vol}_g(B_r(x))}{V_{c,n}(r)}%
%=1.
%\end{equation}

In the sequel, we shall explore the following comparison results (see
Shen \cite{Shen-volume}, Wu and Xin \cite[Theorems 6.1 \&
6.3]{Wu-Xin}, Pigola, Rigoli and Setti  \cite[Theorem 2.4]{Pigola-Rigoli-Setti2008}):
\begin{itemize}
%	\item {\it Bishop-Gromov volume comparison theorem}: the function $r\mapsto
%	\frac{\mathrm{Vol}_g(B_r(x))}{V_{c,n}(r)}, \ r>0,$ is
%	non-decreasing for every  $x\in M$. In particular, we
%	have
%	\begin{equation}  \label{volume-comp-altalanos-0}
%	{\mathrm{Vol}_g(B_r(x))}\geq V_{c,n}(r)\ {\rm for\ every}\ r>0.
%	\end{equation}
%	Moreover,  equality holds in (\ref{volume-comp-altalanos-0}) for every $x\in M$ and $r>0$ if and only if $\mathbf{K}= c$.
	\item {\it Laplace comparison theorem I}: if ${\bf
		K}\leq c$ for some $c\in \mathbb R$, then \begin{equation}  \label{laplace-comp-altalanos-0}
	\Delta_gd_g(x_0,x)\geq (n-1){\bf
		ct}_c(d_g(x_0,x));
	\end{equation}
%	Moreover, equality holds in (\ref{laplace-comp-altalanos-0}) for every $x\in M\setminus \{x_0\}$ if and only if $\mathbf{K}= c$.
	
		\item {\it Laplace comparison theorem II}: if $ {\bf K}\geq k_0$ for some $k_0\in \mathbb R$, then \begin{equation}  \label{laplace-comp-altalanos-11}
		\Delta_gd_g(x_0,x)\leq (n-1){\bf
			ct}_{k_0}(d_g(x_0,x)),
		\end{equation}
\end{itemize}
where these relations are understood in the distributional sense. Note that in (\ref{laplace-comp-altalanos-11}) it is enough to have the lower bound $(n-1)k_0$ for the Ricci curvature. 

\section{Multipolar Hardy inequalities: proof of Theorems \ref{main-main1} and \ref{masodik}}\label{section-2}

 {\it Proof of Theorem \ref{main-main1}.} Let $\ds E=\prod_{i=1}^m d_i^{2-n}$ and fix $u\in C_0^\infty(M)$ arbitrarily. 
	%		\begin{equation}\label{identitywithlaplace}\int_M |\nabla_g u|^2\d-\mu_H \int_M u^2 H_-\d=\int_M \left|\nabla_g\left(uE^{-1/2}\right)\right|^2 E\d+\frac{n-2}{2}(I^{d_1}(u)+I^{d_2}(u)).\end{equation}
	A direct calculation on the set  $M\setminus \bigcup_{i=1}^m (\{x_i\}\cup {\rm cut}(x_i))$ yields that
	$$\nabla_g\left(uE^{-\frac{1}{m}}\right)=E^{-\frac{1}{m}}\nabla_g u+  \frac{n-2}{m}u E^{-\frac{1}{m}}\sum_{i=1}^m\frac{\nabla_g d_i}{d_i}.$$
	Integrating the latter relation, the divergence theorem and eikonal equation (\ref{dist-gradient}) give that		\begin{align*}\int_{M} \left|\nabla_g\left(uE^{-\frac{1}{m}}\right)\right|^2 E^\frac{2}{m}\d=&\int_{M} |\nabla_g u|^2\d+\frac{(n-2)^2}{m^2} \int_{M}  \left|\sum_{i=1}^m\frac{\nabla_g d_i}{d_i}\right|^2u^2\d\\&+\frac{n-2}{m}\sum_{i=1}^m\int_{M} \left\langle\nabla_g u^2,\frac{\nabla_g d_i}{d_i} \right\rangle \d \label{baloldaliszamolas}\\ =&\int_{M} |\nabla_g u|^2\d+\frac{(n-2)^2}{m^2} \int_{M}  \left|\sum_{i=1}^m\frac{\nabla_g d_i}{d_i}\right|^2u^2\d\\&-\frac{n-2}{m}\sum_{i=1}^m\int_{M}  {\rm div}\left(\frac{\nabla_g d_i}{d_i}\right)u^2\d \nonumber.
	\end{align*}
	Due to (\ref{dist-gradient}), we have  $${\rm div}\left(\frac{\nabla_g d_i}{d_i}\right)=\frac{d_i\Delta_g d_i-1}{d_i^2},\ i\in \{1,...,m\}.$$
	Thus, an algebraic reorganization of the latter relation provides an  Agmon-Allegretto-Piepenbrink-type multipolar representation   \begin{eqnarray}\label{identity}
	 \nonumber \int_{M} |\nabla_g u|^2\d-\frac{(n-2)^2}{m^2}\sum_{1\leq i<j\leq m}\int_{M} \left|\frac{\nabla_g d_i}{d_i}-\frac{\nabla_g d_j}{d_j}\right|^2u^2\d&=&\int_{M} \left|\nabla_g\left(uE^{-1/m}\right)\right|^2 E^{2/m}\d\\&&+\frac{n-2}{m}\sum_{i=1}^m\mathcal K_i(u),\end{eqnarray}
	where $\ds \mathcal K_i(u)=\int_{M}\frac{d_i\Delta_g d_i-(n-1)}{d_i^2}u^2\d$. Inequality  (\ref{originial-general}) directly  follows by (\ref{identity}).  
	
In the sequel, we deal with the optimality of the constant $\frac{(n-2)^2}{m^2}$ in (\ref{originial-general}) when $m=2$. In this case the right hand side of (\ref{originial-general}) behaves as $\frac{(n-2)^2}{4}d_g(x,x_i)^{-2}$ whenever $x\to x_i$ and by the local behavior of the geodesic balls (see (\ref{kicsigombok})) we may expect the optimality of $\frac{(n-2)^2}{4}$. In order to be more explicit,  let $A_i[r,R]=\{x\in M:r\leq d_i(x)\leq R\}$  for $r<R$ and $i\in \{1,...,m\}$. 
If $0<r<<R$ are within the range of (\ref{kicsigombok}), a layer cake representation yields for every $i\in \{1,...,m\}$ that 
\begin{eqnarray}\label{distance-layer-cake}
\nonumber \int_{A_i[r,R]}d_i^{-n}\d&=&\frac{{\rm Vol}_g(B_R(x_i))}{R^n}-\frac{{\rm Vol}_g(B_r(x_i))}{r^{n}}+n\int_{r}^R{\rm Vol}_g( B_\rho(x_i))\rho^{-1-n}{\rm d}\rho\\
&=& o(R)+n\omega_n \log\frac{R}{r}.
\end{eqnarray}

%-------
%
%
%If $0<r<R$ and $\alpha>1$, then a layer cake representation yields 
%\begin{eqnarray}\label{distance-alpha}
%\int_{A_i[r,R]}d_i^{-\alpha}\d&=&\frac{{\rm Vol}_g(B_R(x_i))}{R^\alpha}-\frac{{\rm Vol}_g(B_r(x_i))}{r^{\alpha}}+\alpha\int_{r}^R{\rm Vol}_g( B_\rho(x_i))\rho^{-1-\alpha}{\rm d}\rho.
%\end{eqnarray}
%In particular, if $0<r<<R$ are within the range of (\ref{kicsigombok}), then
%\begin{eqnarray}\label{distance-layer-cake}
%\int_{A_i[r,R]}d_i^{-n}\d&=&
%o(R)+n\omega_n \log\frac{R}{r}.
%\end{eqnarray}
%
%--------

Let $S=\{x_1,x_2\}$ be the set of poles, $x_1\neq x_2$.   
 %We are going to sketch the proof of  
%\begin{eqnarray*}\int_{A_i[\eps^2,\eps]}\frac{u_\eps^2}{d_i^2}
%	\left[\frac{1}{\log\left(\frac{d_i}{\eps^2}\right)^2}-\frac{2\sqrt{\mu_H}}{\log\left(\frac{d_i}{\eps^2}\right)}\right]\d &=&\eps^*\int_{A_i[\eps^2,\eps]}d_i^{-n} \left[1-2\sqrt{\mu_H}\log\left(\frac{d_i}{\eps^2}\right)\right]\d \\
%	&\leq& \eps^*\int_{A_i[\eps^2,\eps]}d_i^{-n}\d \\
%	&=&\eps^*\left(\frac{{\rm Vol}_g(B_g(x_i,\eps))}{\eps^n}-\frac{{\rm Vol}_g(B_g(x_i,\eps^2))}{\eps^{2n}}+n\int_{\eps^2}^\eps{\rm Vol}_g B_g(x_i,\rho)\rho^{-1-n}{\rm d}\rho\right)\\
%	&\leq& \eps^*\left(\frac{{\rm Vol}_g(B_g(x_i,\eps))}{\eps^n}-\frac{{\rm Vol}_g(B_g(x_i,\eps^2))}{\eps^{2n}}+2n\omega_n\int_{\eps^2}^\eps\frac{1}{\rho}{\rm d}\rho\right)\\
%	&\leq& 2\eps^*+2n\omega_n \eps^* \log\frac{1}{\eps}=2\eepsn+2n\omega_n\eeps \to 0
%\end{eqnarray*}
%	\begin{equation}\label{optimalis_hatarertek}
%	\ds \inf_{u\in C_0^\infty(N(x_0))\setminus \{0\}}\frac{\ds\int_{N(x_0)} |\nabla_g u|^2\d-\frac{n-2}{2}\sum_{i=1}^{2}\ds\int_{N(x_0)} \frac{d_i\Delta d_i-(n-1)}{d_i^2}u^2\d}{\ds\int_{N(x_0)} \left|\frac{\nabla_g d_1}{d_1}-\frac{\nabla_g d_2}{d_2}\right|^2u^2\d}=\frac{(n-2)^2}{4}= \mu_H.	\end{equation}
	 Let $\eps\in (0,1)$ be small enough such that it belongs to the range of (\ref{kicsigombok}),  
	 %$\bigcup_{i=1}^2B_{2\sqrt{\eps}}(x_i)\subset N(x_0)$ 
	 and $B_{2\sqrt{\eps}}(x_1)\cap B_{2\sqrt{\eps}}(x_2)=\emptyset$. 
%	
%	 and $d_g(x,y)<\frac{\pi}{\sqrt{c_0}}$ for every $(x,y)\in A_1[\eps^2,\sqrt{\eps}]\times A_2[\eps^2,\sqrt{\eps}]$  whenever $c_0>0$. The latter step is possible, due to a continuity reason and our assumption. 
	Let  $$u_\eps(x)=\left\{
	\begin{array}{lll}
	\frac{\log\left(\frac{d_i(x)}{\eps^2}\right)}{\log\left(\frac{1}{\eps}\right)}d_i(x)^\frac{2-n}{2}, & {\rm if} & x\in A_i[\eps^2,\eps]; \\
	\frac{2\log\left(\frac{\sqrt{\eps}}{d_i(x)}\right)}{\log\left(\frac{1}{\eps}\right)}d_i(x)^\frac{2-n}{2}, & {\rm if} & x\in A_i[\eps,\sqrt{\eps}];  \\
	0, & &\hbox{ otherwise,}
	\end{array}
	\right.
%	\hbox{ and }\tilde{E}=\left\{
%	\begin{array}{lll}
%	d_i(x)^{2-n}, &{\rm if} & x\in A_i[\eps^2,\sqrt{\eps}];  \\
%	0, & &\hbox{ otherwise.}
%	\end{array}
	%\right.
	$$
%	and 
%	$$\tilde{E}(x)=\left\{
%	\begin{array}{lll}
%	d_i(x)^{2-n}, & {\rm if} & x\in A_i[\eps^2,\sqrt{\eps}];  \\
%	0, && \hbox{ otherwise,}
%	\end{array}
%	\right.$$
	with $i\in \{1,2\}.$
	Note that  $u_\eps\in C^0(M)$,  having compact support $\bigcup_{i=1}^2 A_i[\eps^2,\sqrt{\eps}]\subset M;$ in fact,  $u_\eps$ can be used as  a test function in (\ref{originial-general}). For later use let us denote by $\eps^*=\frac{1}{\log\left(\frac{1}{\eps}\right)^2},$ $$\ds\mathcal{I}_\eps=\int_{M} |\nabla_g u_\eps|^2\d, \  \ds\mathcal{L}_\eps=\int_{M}  \frac{\langle \nabla_g d_1,\nabla_g d_2 \rangle}{d_1d_2}u_\eps^2\d, \ \ds\mathcal K_\eps=\sum_{i=1}^{2}\int_{M}  \frac{d_i \Delta_g d_i-(n-1)}{d_i^2}u_\eps^{2}\d$$ and $$\ \ds\mathcal{J}_\eps=\int_{M}  \left[\frac{1}{d_1^2}+\frac{1}{d_2^2}\right]u_\eps^2\d.$$
	The proof is based on the following claims:   
	 	\begin{equation}\label{osszefugges-I-J}
	 	\mathcal{I}_\eps-\mu_H\mathcal{J}_\eps=\mathcal O(1),\ \mathcal{L}_\eps=\mathcal O(\sqrt[4]{\eps})\ and\ \mathcal{K}_\eps=\mathcal O(\sqrt[4]{\eps})\ \ {\rm as}\ \ \eps\to 0,
	 	\end{equation}
	 	and 
	 	\begin{equation}\label{hatarertek-J}
	 	\lim_{\eps\to 0}\mathcal{J}_\eps=+ \infty. 
	 	\end{equation}
	The above properties can be obtained by direct computations, based on  the estimates (\ref{kicsigombok}), (\ref{distance-layer-cake}) and 
	 	   $$\left|\Delta_g d_i-\frac{n-1}{d_i}\right|\leq 1 \mbox{ \ a.e.  in } B_{\sqrt{\varepsilon}}(x_i),$$
	 	(for $\eps>0$ small enough), see Krist\'aly and Repovs \cite{Kristaly-Repovs-2016-CCM}. 
Combining  relations (\ref{osszefugges-I-J}) and (\ref{hatarertek-J}) with inequality (\ref{originial-general}), we have that $$\mu_H\leq\frac{\mathcal{I}_\eps-\frac{n-2}{2}\mathcal{K}_\eps}{\mathcal{J}_\eps-2\mathcal{L}_\eps} \leq\frac{\mathcal{I}_\eps+\frac{n-2}{2}|\mathcal{K}_\eps|}{\mathcal{J}_\eps-2|\mathcal{L}_\eps|}=
  \frac{\mu_H\mathcal J_\eps+\mathcal O(1)}{\mathcal{J}_\eps+\mathcal O(\sqrt[4]{\eps})}\to \mu_H\ {\rm as}\ \eps\to 0,$$ 
  which concludes the proof. 
\hfill  $ \square$

\begin{remark}\rm
	\noindent 
	 Let us assume that in Theorem \ref{main-main1}, $(M,g)$ is a Riemannian manifold  with sectional curvature verifying ${\bf K}\leq c$.  
	By the Laplace comparison theorem I (see (\ref{laplace-comp-altalanos-0})) we have:
	%	\begin{equation}
	%	\int_M |\nabla_g u|^2\d\geq \frac{(n-2)^2}{n^2}\sum_{1\leq i<j\leq n}\int_M u^2\left|\frac{\nabla_g d_i}{d_i}-\frac{\nabla_g d_j}{d_j}\right|^2\d.
	%	\end{equation}
	\begin{eqnarray}\label{ohadamard-general}
	\int_{M} |\nabla_g u|^2\d&\geq&\frac{(n-2)^2}{m^2}\sum_{1\leq i<j\leq m}\int_{M} \left|\frac{\nabla_g d_i}{d_i}-\frac{\nabla_g d_j}{d_j}\right|^2u^2\d\nonumber\\&&+ \frac{(n-2)(n-1)}{m}\sum_{i=1}^{m}\int_{M} \frac{\boldsymbol{\rm D}_{c}(d_{i})}{d_i^2}u^{2}\d,\ \ \forall u\in C_0^\infty(M),\end{eqnarray}
	where $\boldsymbol{\rm D}_{c}(r)=r{\bf ct}_c(r)-1,$ $r\geq 0.$
	%	where $\ds I^{d_i}(u)=\int_M u^2 \frac{d_i\Delta_g d_i-(n-1)}{d_i^2}\d$, where $i\in \{1,2,...,n\}$.
%	Moreover, the constant $\ds \frac{(n-2)^2}{m^2}$ is optimal.
	In addition, if $(M,g)$ is a Cartan-Hadamard manifold with ${\bf K}\leq c\leq 0$, then 
	  $\boldsymbol{\rm D}_{c}(r)\geq \frac{3|c|r^2}{\pi^2+|c|r^2}$ for all $r\geq 0.$
	Accordingly, stronger curvature of the Cartan-Hadamard manifold implies  improvement in the multipolar Hardy inequality (\ref{ohadamard-general}). 
\end{remark}

%\begin{example}\rm We provide two examples
%	\begin{itemize}
%		\item In the case when $M={\mathbb{S}}_+^{n}$, we have that $N(x_0)={\mathbb{S}}_+^{n}$. For more details, see \cite{doCarmo}.
%%		\begin{figure}[h]
%%			\includegraphics[scale=0.5]{hemisphere5.jpg}
%%		\end{figure}
%		\item If $M$ is a Hadamard manifold, then using Hadamard theorem \cite[Theorem 3.1, page 149]{doCarmo}
%	\end{itemize}
%\end{example}

 {\it Proof of Theorem \ref{masodik}.}
%It is clear that from the Theorem \ref{main-main1} one has the following inequality \begin{eqnarray}
%\int_{N(x_0)} |\nabla_g u|^2\d&\geq&\frac{(n-2)^2}{m^2}\sum_{1\leq i<j\leq m}\int_{N(x_0)} u^2\left|\frac{\nabla_g d_i}{d_i}-\frac{\nabla_g d_j}{d_j}\right|^2\d\nonumber\\&&+ \frac{(n-2)}{m}\sum_{i=1}^{m}\ds\int_{N(x_0)} u^2\frac{d_i\Delta_g d_i-(n-1)}{d_i^2}\d,\ \ \forall u\in C_0^\infty(N(x_0))\nonumber.\end{eqnarray}
It is clear that 
\begin{equation}\label{skalar-szorzat}
\left|\frac{\nabla_g d_i}{d_i}-\frac{\nabla_g d_j}{d_j}\right|^2=\frac{1}{d_i^2}+\frac{1}{d_j^2}-2\frac{\langle \nabla_g d_i,\nabla_g d_j \rangle}{d_id_j}.
\end{equation}
Let us fix two arbitrary poles $x_i$ and $x_j$ $(i\neq j)$, and a point $x\in \tilde S$. We consider the Alexandrov comparison triangle with vertexes $\tilde x_i$, $\tilde x_j$ and $\tilde x$ in the space $M_0$ of constant sectional curvature $k_0$, associated to the points $x_i$, $x_j$ and $x$, respectively. More precisely, $M_0$ is the $n$-dimensional hyperbolic space of curvature $k_0$ when $k_0<0$, the Euclidean space when $k_0=0$, and the sphere with curvature $k_0$ when $k_0>0.$

We first prove that the perimeter $L(x_ix_jx)$ of the geodesic triangle $x_ix_jx$ is strictly less than $\frac{2\pi}{\sqrt{k_0}}$; clearly, when $k_0\leq 0$ we have nothing to prove. Due to the strict convexity of $\tilde S$, the unique geodesic segments joining pairwisely  the points  $x_i$, $x_j$ and $x$ belong entirely to $\tilde S$ and as such, these points are not conjugate to each other. Thus, due to do Carmo \cite[Proposition 2.4, p. 218]{doCarmo}, every side of the geodesic triangle has length  $\leq \frac{\pi}{\sqrt{k_0}}.$ By Klingenberg  \cite[Theorem 2.7.12, p. 226]{Klingenberg} we have that $L(x_ix_jx)\leq \frac{2\pi}{\sqrt{k_0}}$. Moreover, by the same result of Klingenberg, if $L(x_ix_jx)= \frac{2\pi}{\sqrt{k_0}}$, it follows that either $x_ix_jx$ forms a closed geodesic, or $x_ix_jx$ is a geodesic biangle (one of the sides has length $\frac{\pi}{\sqrt{k_0}}$ and the two remaining sides form together  a minimizing geodesic of length $\frac{\pi}{\sqrt{k_0}}$). In both cases we find points on the sides of the geodesic triangle $x_ix_jx$ which can be joined by two minimizing geodesics,  contradicting the strict convexity of $\tilde S$.

%Itt az a baj, hogy amikor a zart geodetikus vonal eseten vagyunk, akkor a "szembe levo" pontok, amik az ellentmondast adnak, nem biztos, hogy szimultan benne lesznek a totally normal neighborhood-ba. Igy mindenkepp konvexitas kell. 

 We are now in the position to apply a Toponogov-type comparison result, see Klingenberg  \cite[Proposition 2.7.7, p. 220]{Klingenberg}; namely, we have the comparison of angles
$$\gamma_{M_0}=m\widehat{(\tilde{x}_i\tilde{x}\tilde{x_j})}\leq \gamma_M=m\widehat{(x_ixx_j)}.$$ Therefore, $\langle \nabla_g d_i,\nabla_g d_j \rangle=\cos(\gamma_M)\leq \cos(\gamma_{M_0}).$ 

 On the other hand, by the cosine-law on the space form $M_0$, see Bridson and Haefliger \cite[p. 24]{Bridson-Haefliger}, we have 
$$\left\{
\begin{array}{lll}
	\cosh(\sqrt{-k_0}d_{ij})=\cosh(\sqrt{-k_0}d_{i})\cosh(\sqrt{-k_0}d_{j})-\sinh(\sqrt{-k_0}d_{i})\sinh(\sqrt{-k_0}d_{j})\cos(\gamma_{M_0}), & {\rm if} & \hbox{\rm $k_0<0;$} \\
	\cos(\sqrt{k_0}d_{ij})=\cos(\sqrt{k_0}d_{i})\cos(\sqrt{k_0}d_{j})+\sin(\sqrt{k_0}d_{i})\sin(\sqrt{k_0}d_{j})\cos(\gamma_{M_0}), & {\rm if} & \hbox{\rm $k_0>0$;} \\
	d_{ij}^2=d_i^2+d_j^2-2d_id_j\cos(\gamma_{M_0}), & {\rm if} & \hbox{\rm $k_0=0$.}
	\end{array}
\right.$$
Consequently, $$\left\{
\begin{array}{lll}
	\cos(\gamma_{M})\leq\frac{\cosh(\sqrt{-k_0}d_{i})\cosh(\sqrt{-k_0}d_{j})-\cosh(\sqrt{-k_0}d_{ij})}{\sinh(\sqrt{-k_0}d_{i})\sinh(\sqrt{-k_0}d_{j})}, & {\rm if} & \hbox{\rm $k_0<0;$} \\
	\cos(\gamma_M)\leq\frac{\cos(\sqrt{k_0}d_{ij})-\cos(\sqrt{k_0}d_{i})\cos(\sqrt{k_0}d_{j})}{\sin(\sqrt{k_0}d_{i})\sin(\sqrt{k_0}d_{j})}, & {\rm if} & \hbox{\rm $k_0>0;$}  \\
	\cos(\gamma_M)\leq \frac{d_i^2+d_j^2-d_{ij}^2}{2d_id_j}, & {\rm if} & \hbox{\rm $k_0=0,$}
\end{array}
\right.$$
which implies 
$$\frac{1}{d_i^2}+\frac{1}{d_j^2}-\frac{2\cos(\gamma_M)}{d_id_j}\geq \left\{
\begin{array}{lll}
\frac{4}{d_id_j}\frac{{\bf s}_{k_0}^2\left(\frac{d_{ij}}{2}\right)}{{\bf s}_{k_0}(d_{i}){\bf s}_{k_0}(d_{j})}+R_{ij}(k_0), & {\rm if} & \hbox{\rm $k_0\neq 0;$} \\
\frac{d_{ij}^2}{d_i^2d_j^2}, & {\rm if} & \hbox{\rm $k_0=0,$}
\end{array}
\right.$$
where the expression $R_{ij}(k_0)$ is given in the statement of the theorem. Relation (\ref{skalar-szorzat}), the above inequality and (\ref{originial-general}) imply together \eqref{multipolarcosinegeneral}. \hfill $\square$\\

\section{Applications: bipolar Schr\"odinger-type equations on curved settings}\label{section-appl}

In this section we present two applications in different geometric frameworks. In order to avoid technicalities, we shall restrict our attention to problems with only two poles; the interested reader may extend these results to multiple poles with suitable modifications.

\subsection{A bipolar Schr\"odinger-type equation on Cartan-Hadamard manifolds}

First of all, by using inequalities (\ref{originial-general}) and (\ref{multipolarcosinegeneral}),  we obtain the following non-positively curved versions of Cazacu and Zuazua's  inequalities (\ref{paramultipolar}) and (\ref{MHIEUC}) for multiple poles, respectively:

\begin{corollary}\label{corollary-extension}	Let $(M,g)$ be an $n$-dimensional Cartan-Hadamard  manifold and let $S=\{x_1,...,x_m\}\subset M$ be the set of distinct poles, with  $n\geq 3$ and $m\geq 2$. Then we have the following inequality:
	%	\begin{itemize}
	%		\item[(i)] We have the following inequality \begin{eqnarray}\label{originial}
	%		\int_{M} |\nabla_g u|^2\d&\geq&\frac{(n-2)^2}{m^2}\sum_{1\leq i<j\leq m}\int_{M} u^2\left|\frac{\nabla_g d_i}{d_i}-\frac{\nabla_g d_j}{d_j}\right|^2\d,\ \ \forall u\in C_0^\infty(M).\end{eqnarray}
	\begin{equation}\label{original_on_H1}
	\int_{M} |\nabla_g u|^2\d\geq\frac{(n-2)^2}{m^2}\sum_{1\leq i<j\leq m}\int_{M} \left|\frac{\nabla_g d_i}{d_i}-\frac{\nabla_g d_j}{d_j}\right|^2u^2\d,\ \ \forall u\in H^1_g(M).\end{equation}
	%	\end{itemize}
	Moreover, if ${\bf K}\geq k_0$ for some $k_0\in \mathbb R,$
	then 
	\begin{equation}\label{Hadamardcosine}
	\int_M |\nabla_g u|^2\d\geq\frac{4(n-2)^2}{m^2}\sum_{1\leq i<j\leq m}\int_M \frac{{\bf s}_{k_0}^2\left(\frac{d_{ij}}{2}\right)}{d_id_j{\bf s}_{k_0}(d_{i}){\bf s}_{k_0}(d_{j})}u^2\d,\ \ \forall u\in H^1_g(M).
	\end{equation}
\end{corollary}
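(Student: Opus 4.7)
\medskip

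\noindent \emph{Proof proposal for Corollary \ref{corollary-extension}.} The plan is to recognize both inequalities as direct consequences of Theorems \ref{main-main1} and \ref{masodik}, once we verify that the curvature hypotheses of a Cartan-Hadamard manifold force the ``extra'' terms on the right-hand sides to be non-negative, so that they can be discarded.

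For inequality (\ref{original_on_H1}), I would first observe that since $(M,g)$ is Cartan-Hadamard we have $\mathbf{K}\leq 0$. The Laplace comparison theorem I (applied with $c=0$) then yields, in the distributional sense,
\[
\Delta_g d_i(x) \;\geq\; (n-1)\mathbf{ct}_0(d_i(x)) \;=\; \frac{n-1}{d_i(x)}, \qquad i\in\{1,\dots,m\},
\]
on $M\setminus(\{x_i\}\cup\mathrm{cut}(x_i))$. Equivalently, $d_i\Delta_g d_i-(n-1)\geq 0$, so the second sum on the RHS of (\ref{originial-general}) is non-negative and can be dropped. This gives (\ref{original_on_H1}) for all $u\in C_0^\infty(M)$; a standard density argument, combined with Fatou's lemma on the non-negative right-hand side, extends the inequality to arbitrary $u\in H^1_g(M)$.

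For inequality (\ref{Hadamardcosine}), the same Laplace comparison argument handles the last summand of (\ref{multipolarcosinegeneral}). To apply Theorem \ref{masodik} I need a strictly convex open set $\tilde S$ containing the poles; here, by the Cartan-Hadamard theorem, any two points are joined by a \emph{unique} minimizing geodesic, so $M$ itself is strictly convex and I may take $\tilde S=M$. It then remains to show that $R_{ij}(k_0)\geq 0$ when $k_0\leq 0$. For $k_0=0$ this is immediate. For $k_0<0$, setting $a=\sqrt{-k_0}\,d_i$, $b=\sqrt{-k_0}\,d_j$ and using the definitions of $\mathbf{s}_{k_0}$ and $\mathbf{ct}_{k_0}$, a short computation gives
\[
\frac{1}{\mathbf{s}_{k_0}(d_i)\mathbf{s}_{k_0}(d_j)}-\mathbf{ct}_{k_0}(d_i)\mathbf{ct}_{k_0}(d_j)\;=\;(-k_0)\cdot\frac{1-\cosh(a)\cosh(b)}{\sinh(a)\sinh(b)}\;\leq\;0,
\]
since $\cosh(a)\cosh(b)\geq 1$. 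Because $-\tfrac{2}{k_0 d_id_j}>0$ when $k_0<0$, subtracting a non-positive quantity adds a non-negative one, and hence $R_{ij}(k_0)\geq \tfrac{1}{d_i^2}+\tfrac{1}{d_j^2}\geq 0$. Dropping the two non-negative sums from (\ref{multipolarcosinegeneral}) delivers (\ref{Hadamardcosine}) for $u\in C_0^\infty(M)$, and density again extends it to $H^1_g(M)$.

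The only substantive issue I anticipate is the density step: a priori the integrals on the RHS involve the singular weights $d_i^{-2}$ (or the corresponding $\mathbf{s}_{k_0}$-weighted expressions), so one must check that the mollification/cut-off procedure keeps the right-hand side lower semicontinuous. This is routine once the weights are known to be locally integrable away from the poles and $u\in H^1_g(M)$ is approximated by $C_0^\infty(M)$-functions with, if necessary, a logarithmic cutoff near each $x_i$; passing to the limit on the left by continuity of the Dirichlet energy and on the right by Fatou completes the argument. \hfill$\square$
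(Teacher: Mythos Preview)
Your overall strategy coincides with the paper's: drop the Laplacian remainder in (\ref{originial-general}) via Laplace comparison I with $c=0$, then for (\ref{Hadamardcosine}) invoke (\ref{multipolarcosinegeneral}) with $\tilde S=M$ and discard the $R_{ij}(k_0)$-sum after checking it is non-negative; finally pass from $C_0^\infty(M)$ to $H^1_g(M)$ by density and Fatou. That is exactly what the paper does.

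However, your verification of $R_{ij}(k_0)\geq 0$ contains a sign slip that invalidates the step. You correctly compute, for $k_0<0$ and $a=\sqrt{-k_0}\,d_i$, $b=\sqrt{-k_0}\,d_j$, that
\[
\frac{1}{\mathbf{s}_{k_0}(d_i)\mathbf{s}_{k_0}(d_j)}-\mathbf{ct}_{k_0}(d_i)\mathbf{ct}_{k_0}(d_j)=(-k_0)\,\frac{1-\cosh a\cosh b}{\sinh a\sinh b}\leq 0.
\]
But then $\dfrac{2}{k_0 d_id_j}$ is \emph{negative}, so the product $\dfrac{2}{k_0 d_id_j}\cdot(\text{non-positive})$ is \emph{non-negative}, and subtracting it gives $R_{ij}(k_0)\leq \dfrac{1}{d_i^2}+\dfrac{1}{d_j^2}$, the opposite of what you wrote. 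In fact $R_{ij}(k_0)<\dfrac{1}{d_i^2}+\dfrac{1}{d_j^2}$ strictly whenever $a,b>0$, so your route to non-negativity cannot work.

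What actually needs to be checked is the genuinely two-sided estimate
\[
\frac{1}{d_i^2}+\frac{1}{d_j^2}\ \geq\ \frac{2}{d_id_j}\cdot\frac{\cosh a\cosh b-1}{\sinh a\sinh b},
\]
equivalently $(a^2+b^2)\sinh a\sinh b\geq 2ab(\cosh a\cosh b-1)$. Setting $u=a+b$, $v=a-b$ and using the product-to-sum identities this reduces to $v^2(\cosh u-1)\geq u^2(\cosh v-1)$, which holds because $t\mapsto(\cosh t-1)/t^2=\sum_{k\geq 1}t^{2k-2}/(2k)!$ is even and increasing and $|u|\geq|v|$. This is the ``elementary properties of hyperbolic functions'' the paper alludes to; once you replace your sign argument by this computation, the proof is complete and matches the paper's.
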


{\it Proof}. Since $(M,g)$ is a Cartan-Hadamard manifold, by using inequality (\ref{originial-general}) and the Laplace comparison theorem I (i.e., inequality (\ref{laplace-comp-altalanos-0}) for $c=0$), standard approximation procedure based on the density of $C_0^\infty(M)$ in $H_g^1(M)$ and Fatou's lemma immediately imply (\ref{original_on_H1}). Moreover, elementary properties of hyperbolic functions show that $R_{ij}(k_0)\geq 0$ (since $k_0\leq 0$). Thus, the latter inequality and  (\ref{multipolarcosinegeneral}) yield  (\ref{Hadamardcosine}). 
\hfill $\square$ \\

In the sequel, let $(M,g)$ be an $n$-dimensional Cartan-Hadamard manifold $(n\geq 3)$ with ${\bf K}\geq k_0$ for some $k_0\leq 0$, and $S=\{x_1,x_2\}\subset M$ be the set of poles.  In this subsection we deal with the Schr\"odinger-type equation
$$
-\Delta_g u +V(x)u=
\lambda \frac{{\bf s}_{k_0}^2\left(\frac{d_{12}}{2}\right)}{d_1d_2{\bf s}_{k_0}(d_{1}){\bf s}_{k_0}(d_{2})}u+\mu W(x)f(u)\ \   \hbox{ in } M, \\
\eqno{(\mathscr{P}_M^\mu)}
$$
where   $\lambda \in \left[0,{(n-2)^2}\right)$ is fixed, $\mu\geq 0$ is  a parameter, and the continuous function $f:[0,\infty)\to \mathbb R$ verifies 
\begin{itemize}
	\item[$(f_1)$] $f(s)=o(s)$ as $s\to 0^+$ and $s\to \infty;$
	\item[$(f_2)$] $F(s_0)>0$ for some $s_0> 0,$ where $\ds F(s)=\int_0^s f(t){\rm d}t.$
\end{itemize}
According to 
$(f_1)$ and  $(f_2)$,
the number $c_f=\max_{s>0}\frac{f(s)}{s}$ is well defined and positive.

On the potential $V:M\to \mathbb R$ we require that
\begin{itemize}
	\item[$(V_1)$] $\ds V_0=\inf_{x\in M}V(x)>0$;
	\item[$(V_2)$] $\ds \lim_{d_g(x_0,x)\to \infty}V(x)=+\infty$ for some $x_0\in M$,
\end{itemize}
and $W:M\to \mathbb R$ is assumed to be positive. 
Elliptic problems with similar assumptions on $V$ have been studied on Euclidean spaces, see e.g. Bartsch, Pankov and Wang \cite{Bartsch-Pankov-Wang}, Bartsch and Wang \cite{Bartsch-Wang}, Rabinowitz \cite{Rabinowitz-ZAMP} and Willem \cite{Willem-book}.

%In the last decades, the existence and multiplicity of nontrivial solutions for
%problem 
%$$
%\left\{
%\begin{array}{ll}
%-\Delta u +V(x)u=
%f(x,u), & \hbox{ in } \R^n, \\
%u\to0, & \hbox{ as } |x|\to\infty,
%\end{array}
%\right.\eqno{(\mathscr{P}_{\R^n})}
%$$
%have been extensively investigated in the literature with the aid of critical point theory and variational methods. 
%The pioneering work in this direction was done by Rabinowitz \cite{Rabinowitz} whose hypotheses on $V(x)$ were the following:
%\begin{itemize}
%	\item[$(V_1^E)$] $\ds V_0=\inf_{x\in\R^n}V(x)>0$;
%	\item[$(V_2^E)$] $\ds \lim_{|x|\to \infty}V(x)=+\infty$.
%\end{itemize}

 Before to state our result, let us consider the functional space $$H^1_V(M)=\left\{u\in H^1_g(M): \im{\left(|\nabla_g u|^2+V(x)u^2\right)}<+\infty\right\}$$
endowed with the norm $$\|u\|_V=\left(\im{|\nabla_g u|^2}+\im{V(x)u^2}\right)^{1/2}.$$
The next Rabinowitz-type compactness result (see Rabinowitz \cite{Rabinowitz-ZAMP}) is crucial in the study of weak solutions of  problem $(\mathscr{P}_M^\mu)$:  
\begin{lemma}\label{Hadamardvegtelen-lemma} If $V$ satisfies $(V_1)$ and $(V_2)$,  the embedding $H^1_V(M)\hookrightarrow L^p(M)$ is compact, $p\in [2,2^*)$.
\end{lemma}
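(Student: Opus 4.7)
The plan is to adapt the classical Rabinowitz compactness argument \cite{Rabinowitz-ZAMP} to the Riemannian setting, exploiting the Cartan-Hadamard structure for a global Sobolev inequality and the Hopf-Rinow theorem for local compactness. First I would record the continuous embeddings $H^1_V(M) \hookrightarrow L^p(M)$ for $p \in [2, 2^*]$: by $(V_1)$ one has $V_0 \|u\|_{L^2(M)}^2 \leq \|u\|_V^2$, and since $(M,g)$ is a Cartan-Hadamard manifold, the Hoffman-Spruck/Hebey-type Sobolev inequality yields $\|u\|_{L^{2^*}(M)} \leq C \|\nabla_g u\|_{L^2(M)} \leq C \|u\|_V$; continuity of the $L^p$ embedding for intermediate $p$ follows by interpolation.

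Now let $(u_k) \subset H^1_V(M)$ be bounded; after extracting a subsequence, $u_k \rightharpoonup u$ weakly in $H^1_V(M)$, and hence weakly in each $L^p(M)$ with $p \in [2, 2^*]$. The task is to promote this to strong convergence in $L^p(M)$ for $p \in [2, 2^*)$. I would first treat $p = 2$ by splitting $M$ into a large geodesic ball $B_R(x_0)$ and its complement, using tightness on the outside and Rellich-Kondrachov on the inside.

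For the complement, tightness comes from $(V_2)$: given $\varepsilon > 0$, pick $R > 0$ so that $V(x) \geq 1/\varepsilon$ whenever $d_g(x_0, x) > R$, whence
\begin{equation*}
\int_{M \setminus B_R(x_0)} (u_k - u)^2 \, \d \leq \varepsilon \int_M V(x)(u_k - u)^2 \, \d \leq C' \varepsilon
\end{equation*}
uniformly in $k$. For the ball, $\overline{B_R(x_0)}$ is compact by Hopf-Rinow, and since the exponential map at $x_0$ is a global diffeomorphism on a Cartan-Hadamard manifold, $B_R(x_0)$ pulls back via $\exp_{x_0}$ to the Euclidean ball $B_R(0) \subset T_{x_0}M$ with uniformly comparable metric tensor; the standard Euclidean Rellich-Kondrachov theorem therefore delivers $u_k \to u$ strongly in $L^2(B_R(x_0))$. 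Adding the two bounds yields $\|u_k - u\|_{L^2(M)} \to 0$.

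To reach $p \in (2, 2^*)$, I would interpolate between $L^2$ and $L^{2^*}$: for $\theta = \theta(p) \in (0,1)$,
\begin{equation*}
\|u_k - u\|_{L^p(M)} \leq \|u_k - u\|_{L^2(M)}^{\theta} \, \|u_k - u\|_{L^{2^*}(M)}^{1 - \theta},
\end{equation*}
where the second factor stays uniformly bounded by the Sobolev embedding, while the first tends to zero by the previous step. The step I expect to be most delicate is the tightness estimate: it is the only place where the coercivity condition $(V_2)$ enters, and without it compactness fails in general on a non-compact manifold; care is needed to ensure that the constant $C'$ in the tail bound is controlled by $\sup_k \|u_k\|_V^2$ and does not degenerate as $R$ grows.
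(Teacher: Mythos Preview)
Your proposal is correct and follows essentially the same route as the paper: the tightness-plus-local-Rellich argument for $p=2$ (splitting $M$ into $B_R(x_0)$ and its complement, using $(V_2)$ outside and the compact embedding $H^1_g(M)\hookrightarrow L^2_{\rm loc}(M)$ inside), followed by interpolation against the Cartan-Hadamard Sobolev inequality for $p\in(2,2^*)$. Your version is slightly more explicit---you spell out the continuous embeddings first, invoke Hopf--Rinow and the exponential pullback for the local Rellich step---but the architecture is identical.
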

\begin{proof} Let $\{u_k\}_k \subset H^1_V(M)$ be a bounded sequence in $H^1_V(M)$, i.e., $\|u_k\|_V\leq \eta$ for some $\eta>0.$
 Let $q>0$ be arbitrarily fixed; by $(V_2)$, there exists $R>0$ such that $V(x)\geq q$ for every $x\in M\setminus B_R(x_0)$. 
	Thus,  $$\int_{M\setminus B_R(x_0)} (u_k-u)^2\d\leq \frac{1}{q}\int_{M\setminus B_R(x_0)}V(x)|u_k-u|^2\leq \frac{(\eta+\|u\|_V)^2}{q}.$$
	 On the other hand, 	by $(V_1)$, we have that $H^1_V(M)\hookrightarrow H_g^1(M)\hookrightarrow L_{\rm loc}^2(M)$; thus, up to a subsequence we have that $u_k \to u$ in $L^2_{\rm loc}(M)$. Combining the above two facts and taking into account that $q>0$ can be arbitrary large, we deduce that  $u_k\to u$ in $L^2(M)$; thus the embedding follows for $p=2$.  Now, if $p\in (2,2^*)$, by using an interpolation inequality and the Sobolev inequality on Cartan-Hadamard manifolds (see Hebey \cite[Chapter 8]{Hebey-book}), 
	 one has 
	 \begin{eqnarray*}
	 \|u_k-u\|_{L^p(M)}^p&\leq& \|u_k-u\|_{L^{2^*}(M)}^{n(p-2)/2}\|u_k-u\|_{L^2(M)}^{n(1-p/2^*)}\\&\leq& \mathcal C_n\|\nabla_g(u_k-u)\|_{L^{2}(M)}^{n(p-2)/2}\|u_k-u\|_{L^2(M)}^{n(1-p/2^*)},
	 \end{eqnarray*}
	 where $\mathcal C_n>0$  depends on $n$. Therefore,  
%	 see e.g. Farkas, Krist\'aly and Szak\'al  \cite{Farkas-Kristaly-Szakal}, 
	  $u_k\to u$ in $L^p(M)$ for every $p\in (2,2^*)$. 
\end{proof}

The main result of this subsection is as follows.

\begin{theorem}\label{theorem-application-Hadamard}   
Let $(M,g)$ be an $n$-dimensional Cartan-Hadamard manifold $(n\geq 3)$ with ${\bf K}\geq k_0$ for some $k_0\leq 0$ and let $S=\{x_1,x_2\}\subset M$ be the set of distinct poles.   Let $V,W:M\to \mathbb R$ be positive potentials verifying $(V_1)$, $(V_2)$ and	$W\in L^1(M)\cap L^\infty(M)\setminus\{0\}$, respectively.  Let $f:[0,\infty)\to \mathbb R$  be a continuous
	function verifying $(f_1)$ and $(f_2)$, and $\lambda \in \left[0,{(n-2)^2}\right)$ be fixed.
	Then the following statements hold: 
	\begin{itemize}
		\item[{\rm (i)}]  Problem $(\mathscr{P}_M^\mu)$ has only the zero
		solution whenever  $0\leq
		\mu<V_0\|W\|_{L^\infty(M)}^{-1}c_f^{-1};$
		
		\item[{\rm (ii)}]  There exists $\mu_0>0$ such that problem $(\mathscr{P}_M^\mu)$  has at least
		two distinct  non-zero, non-negative weak
		solutions in $H_V^1(M)$ whenever
		$\mu>\mu_0$.
	\end{itemize}
\end{theorem}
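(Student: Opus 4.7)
The variational set-up for $(\mathscr{P}_M^\mu)$ is the Hilbert space $H^1_V(M)$. Extending $f$ to $\mathbb R$ by $0$ on $(-\infty,0)$ and writing $\tilde F(s)=\int_0^s f(t)\,dt$, introduce
\begin{equation*}
\mathcal E_\mu(u)=\tfrac12\int_M|\nabla_g u|^2\d+\tfrac12\int_M Vu^2\d-\tfrac{\lambda}{2}\int_M H_{12}u^2\d-\mu\int_M W\tilde F(u)\d,
\end{equation*}
where $H_{12}(x)=\frac{{\bf s}_{k_0}^2(d_{12}/2)}{d_1d_2\,{\bf s}_{k_0}(d_1){\bf s}_{k_0}(d_2)}$. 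Corollary~\ref{corollary-extension} with $m=2$ gives $Q_\lambda(u):=\int|\nabla_g u|^2\d-\lambda\int H_{12}u^2\d\geq(1-\lambda/(n-2)^2)\int|\nabla_g u|^2\d\geq 0$, so $\|u\|_{V,\lambda}:=(Q_\lambda(u)+\int Vu^2\d)^{1/2}$ is a Hilbert norm on $H^1_V(M)$ equivalent to $\|\cdot\|_V$. Critical points of $\mathcal E_\mu$ are weak solutions of $(\mathscr{P}_M^\mu)$ and are automatically non-negative, since testing the Euler--Lagrange identity with $u^-$ and using $\tilde f(u)u^-\equiv 0$ forces $\|u^-\|_{V,\lambda}^2=0$.

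\textbf{Proof of (i).} Taking $u$ as test function in the weak formulation and using $\tilde f(s)s\leq c_f s^2$, $V\geq V_0$, $W\leq\|W\|_{L^\infty}$,
$$\|u\|_{V,\lambda}^2=\mu\int_M W\tilde f(u)u\d\leq\frac{\mu\|W\|_{L^\infty}c_f}{V_0}\int_M Vu^2\d\leq\frac{\mu\|W\|_{L^\infty}c_f}{V_0}\|u\|_{V,\lambda}^2,$$
which forces $u\equiv 0$ whenever $\mu\|W\|_{L^\infty}c_f<V_0$.

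\textbf{Proof of (ii), first solution.} Condition $(f_1)$ yields, for every $\varepsilon>0$, a $C_\varepsilon>0$ with $|\tilde F(s)|\leq\varepsilon s^2+C_\varepsilon$ on $\mathbb R$; hence $\mu\int W|\tilde F(u)|\d\leq\mu\varepsilon\|W\|_{L^\infty}V_0^{-1}\int Vu^2\d+\mu C_\varepsilon\|W\|_{L^1}$, and choosing $\varepsilon$ small (depending on $\mu$) gives the coercivity $\mathcal E_\mu(u)\geq c_*\|u\|_V^2-C_\mu$. Decomposing $\mathcal E_\mu=\tfrac12\|\cdot\|_{V,\lambda}^2-\mu\Psi$ with $\Psi(u)=\int W\tilde F(u)\d$, the quadratic part is weakly lower semicontinuous as a Hilbert-norm squared, while $\Psi$ is weakly sequentially \emph{continuous}: indeed $|\tilde F(a)-\tilde F(b)|\leq c_f\max(|a|,|b|)|a-b|$, and Lemma~\ref{Hadamardvegtelen-lemma} delivers strong $L^2$-convergence along weakly convergent sequences. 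The direct method thus produces a global minimizer $u_1\in H^1_V(M)$. To force $u_1\not\equiv 0$ for large $\mu$, fix $x_0\in M$ and a cutoff $\eta_R\in C_c^\infty(M)$ with $\eta_R\equiv 1$ on $B_R(x_0)$, $\operatorname{supp}\eta_R\subset B_{2R}(x_0)$, $0\leq\eta_R\leq 1$, and set $w_R=s_0\eta_R$; since $W\in L^1(M)\cap L^\infty(M)$ and $F$ is bounded on $[0,s_0]$, $\int W\tilde F(w_R)\d\to F(s_0)\|W\|_{L^1}>0$ as $R\to\infty$. Fixing $R_*$ large enough so that this integral is positive and setting
$$\mu_0:=\Bigl(\tfrac12\|w_{R_*}\|_V^2-\tfrac{\lambda}{2}\int_M H_{12}w_{R_*}^2\d\Bigr)\Big/\int_M W\tilde F(w_{R_*})\d,$$
one has $\mathcal E_\mu(u_1)\leq\mathcal E_\mu(w_{R_*})<0=\mathcal E_\mu(0)$ for every $\mu>\mu_0$, so $u_1\neq 0$.

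\textbf{Second solution via mountain pass.} Sublinearity at $0$ refines the estimate to $|\tilde F(s)|\leq\varepsilon s^2+C_\varepsilon|s|^p$ for a fixed $p\in(2,2^*)$; combining this with the Sobolev inequality on Cartan--Hadamard manifolds gives $\mathcal E_\mu(u)\geq c'\|u\|_V^2-C\|u\|_V^p$ for small $\varepsilon$, so there exist $\rho,\alpha>0$ with $\inf_{\|u\|_V=\rho}\mathcal E_\mu(u)\geq\alpha>0$. Since $\mathcal E_\mu(u_1)<0$, the function $u_1$ lies outside the sphere $\{\|u\|_V=\rho\}$ and the mountain pass geometry is realized. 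The Palais--Smale condition follows from coercivity together with Lemma~\ref{Hadamardvegtelen-lemma}: the identity
$$(\mathcal E_\mu'(u_n)-\mathcal E_\mu'(u))(u_n-u)=\|u_n-u\|_{V,\lambda}^2-\mu\int_M W(\tilde f(u_n)-\tilde f(u))(u_n-u)\d,$$
combined with $|\tilde f(a)-\tilde f(b)|\leq c_f(|a|+|b|)$ and strong $L^2$-convergence, reduces PS to $\|u_n-u\|_{V,\lambda}^2\to 0$. The mountain pass theorem then produces a second non-negative critical point $u_2$ with $\mathcal E_\mu(u_2)\geq\alpha>0>\mathcal E_\mu(u_1)$, distinct from $u_1$. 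The principal technical subtlety is that the singular Hardy term $u\mapsto-\tfrac{\lambda}{2}\int H_{12}u^2\d$ is only weakly upper semicontinuous in isolation; the whole argument must be organised so that it is absorbed into the Hardy-equivalent norm $\|\cdot\|_{V,\lambda}$, which is why Corollary~\ref{corollary-extension}(ii) is the decisive analytic input.
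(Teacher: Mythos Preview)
Your proof is correct and follows essentially the same two-step variational strategy as the paper: a global minimizer obtained by the direct method furnishes the first nontrivial solution once $\mu$ is large, and the mountain pass theorem yields the second one at a strictly positive energy level. The principal organizational difference is that you absorb the singular bipolar term into an equivalent Hilbert norm $\|\cdot\|_{V,\lambda}$ from the outset, whereas the paper invokes Corollary~\ref{corollary-extension} each time an estimate is needed; your device streamlines the weak lower semicontinuity and Palais--Smale arguments. A minor distinction is that your threshold $\mu_0$ is defined through a single explicit test function $w_{R_*}$, while the paper takes $\mu_0=\tfrac12\inf\{\|u\|_V^2/\int_M WF(u)\,\d : \int_M WF(u)\,\d>0\}$; either choice suffices for the existence statement, though the paper's is in general smaller.
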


 {\it Proof.} 
According to  $(f_1)$, one has $f(0)=0$. Thus, we may extend
 the function $f$ to the whole 
 $\mathbb R$ by $f(s)=0$ for $s\leq 0,$ which will be
 considered throughout the proof. Fix $\lambda \in \left[0,{(n-2)^2}\right)$.

 (i)  Assume that $u\in
 H_V^1(M)$ is a non-zero weak solution of problem $(\mathscr{P}_M^\mu)$.
 Multiplying $(\mathscr{P}_M^\mu)$ by  $u$, an integration on $M$ gives that 
 $$\im{|\nabla_g u|^2}+\im{V(x)u^2}=
 \lambda \int_M\frac{{\bf s}_{k_0}^2\left(\frac{d_{12}}{2}\right)}{d_1d_2{\bf s}_{k_0}(d_{1}){\bf s}_{k_0}(d_{2})}u^2\d+\mu \int_M W(x)f(u)u\d.$$
 By the latter relation, Corollary \ref{corollary-extension} (see relation (\ref{Hadamardcosine})) and the definition of  $c_f$, it yields that 
 \begin{eqnarray*}
 	% \nonumber to remove numbering (before each equation)
 \im{|\nabla_g u|^2}+V_0\im{u^2} &\leq& \im{|\nabla_g u|^2}+\im{V(x)u^2}\\
 &=&
 \lambda \int_M\frac{{\bf s}_{k_0}^2\left(\frac{d_{12}}{2}\right)}{d_1d_2{\bf s}_{k_0}(d_{1}){\bf s}_{k_0}(d_{2})}u^2\d+\mu \int_M W(x)f(u)u\d\\&\leq&
 \im{|\nabla_g u|^2}+ \mu \|W\|_{L^\infty(M)}c_f\im{u^2}.
 \end{eqnarray*}
 Consequently, if $0\leq \mu
 <V_0\|W\|_{L^\infty(M)}^{-1}c_f^{-1},$ then 
 $u$ is necessarily 0, a contradiction. 
 
 (ii) 
Let us consider the energy functional associated with problem  $(\mathscr{P}_M^\mu)$, i.e., $\mathcal{E}_\mu: H^1_V(M)\to \R$ defined by  $$\mathcal{E}_\mu(u)=\frac{1}{2}\im{(|\nabla_g u|^2+V(x)u^2)}-\frac{\lambda}{2}\im{\frac{{\bf s}_{k_0}^2\left(\frac{d_{12}}{2}\right)}{d_1d_2{\bf s}_{k_0}(d_{1}){\bf s}_{k_0}(d_{2})}u^2}-\mu \int_M W(x)F(u)\d.$$ One can show that $\mathcal{E}_\mu\in C^1(H^1_V(M),\R)$ and  for all $u,w\in H^1_V(M)$ we have \begin{align*}\mathcal{E}_\mu'(u)(w)&=\im{(\langle \nabla_gu, \nabla_g w\rangle+V(x)uw)}-\lambda\im{\frac{{\bf s}_{k_0}^2\left(\frac{d_{12}}{2}\right)}{d_1d_2{\bf s}_{k_0}(d_{1}){\bf s}_{k_0}(d_{2})}uw}-\mu \int_M W(x)f(u)w\d.\end{align*}
Therefore, the critical points of $\mathcal{E}_\mu$ are precisely the weak solutions of  problem $(\mathscr{P}_M^\mu)$ in $H^1_V(M)$. By exploring the sublinear character of $f$ at infinity, Corollary \ref{corollary-extension} and Lemma \ref{Hadamardvegtelen-lemma},  one can see that $\mathcal{E}_\mu$ is  bounded from
below, coercive and satisfies the usual Palais-Smale condition for every $\mu\geq 0$.  Moreover, by  an elementary computation one can see that assumption $(f_1)$ is inherited as a sub-quadratic property in the sense that
\begin{equation}\label{sublinear-sobolev}
\lim_{		\|u\|_{V}\to 0}\frac{\ds\int_M W(x)F(u)\d}{\|u\|_{V}^2}=\lim_{		\|u\|_{V}\to \infty}\frac{\ds\int_M W(x)F(u)\d}{\|u\|_{V}^2}=0.
\end{equation}
Due to $(f_2)$ and $W\neq 0$, we can construct a non-zero truncation function $u_0\in
 H_V^1(M)$ such that $\ds\int_M W(x)F(u_0)\d>0.$ Thus,
 we may define
 $$\mu_0=\frac{1}{2}\inf\left\{\frac{\|u\|_{V}^2}{\ds\int_M W(x)F(u)\d}: u\in H_V^1(M), \ds\int_M W(x)F(u)\d>0
 \right\}.$$ By the relations in 
 (\ref{sublinear-sobolev}), we clearly have that $0<\mu_0<\infty.$ 
 
 Let us fix $\mu>\mu_0.$ Then there exists
 $\tilde u_\mu\in H_V^1(M)$ with $\ds\int_M W(x)F(\tilde u_\mu)\d>0$ such that $\mu>\frac{\ds\|\tilde u_\mu\|_{V}^2}{\ds 2\int_M W(x)F(\tilde u_\mu)\d}\geq\mu_0$.
 Consequently,  $$c_\mu^1:=\inf_{H_V^1(M)}\mathcal E_\mu\leq \mathcal E_\mu(\tilde u_\mu)\leq \frac{1}{2}\|\tilde u_\mu\|_V^2-\mu\ds\int_M W(x)F(\tilde u_\mu)< 0.$$
 Since $\mathcal E_\mu$ is bounded from below and satisfies the
 Palais-Smale condition, the number $c_\mu^1$ is a critical value
 of $\mathcal E_\mu$, i.e.,  there exists $u_\mu^1\in
 H_V^1(M)$ such that $\mathcal
 E_\mu(u_\mu^1)=c_\mu^1<0$ and $\mathcal
 E_\mu'(u_\mu^1)=0.$ In particular, $u_\mu^1\neq 0$ is a weak solution of problem $(\mathscr{P}_M^\mu)$.

Standard computations based on Corollary \ref{corollary-extension} and the embedding $H^1_V(M)\hookrightarrow L^p(M)$ for $p\in (2,2^*)$ show that there exists a sufficiently small $\rho_\mu\in (0,\|\tilde u_\mu\|_V)$ such that 
$$\inf_{\|u\|_{V}=\rho_\mu}\mathcal E_\mu(u)=\eta_\mu>0=\mathcal E_\mu(0)> \mathcal E_\mu(\tilde
u_\mu),$$ which means that the functional $\mathcal E_\mu$ has the
 mountain pass geometry. Therefore, we may apply
the mountain pass theorem, see Rabinowitz \cite{Rabinowitz-ZAMP}, showing that there exists $u_\mu^2\in
H_V^1(M)$ such that $\mathcal E_\mu'(u_\mu^2)=0$ and
$\mathcal E_\mu(u_\mu^2)=c_\mu^2$, where 
$c_\mu^2=\inf_{\gamma\in \Gamma}\max_{t\in [0,1]}\mathcal E_\mu(\gamma(t)),$
and $\Gamma=\{\gamma\in C([0,1];H_V^1(M)):\gamma(0)=0,\
\gamma(1)=\tilde u_\mu\}.$ Due to the fact that  $c_\mu^2\geq
\inf_{\|u\|_{V}=\rho_\mu}\mathcal E_\mu(u)>0$, it is
clear that $0\neq u_\mu^2\neq u_\mu^1.$ Moreover, since $f(s)=0$ for
every $s\leq 0,$ the solutions $u_\mu^1$ and $u_\mu^2$ are
non-negative.  
\hfill $\Box$

\begin{remark}\rm 
	Theorem \ref{theorem-application-Hadamard}    can be applied  on the hyperbolic space $\mathbb H^n=\{y=(y_1,...,y_n):y_n>0\}$ endowed with the metric
	$g_{ij}(y_1,...,y_n)=\frac{\delta_{ij}}{y_n^2}$; it is new even on the Euclidean space $\mathbb R^n$, $n\geq 3$. 
\end{remark}

\subsection{A bipolar Schr\"odinger-type equation on the upper hemisphere}

A positively curved counterpart of (\ref{original_on_H1}) can be stated as follows by using (\ref{originial-general}) and a Mittag-Leffler expansion  (the interested reader can establish a similar inequality to (\ref{Hadamardcosine}) as well):

\begin{corollary}\label{corollary-ezleszajoagombon}
	Let ${\mathbb{S}}_+^{n}$ be the open upper hemisphere and let  $S=\{x_1,...,x_m\}\subset {\mathbb{S}}_+^{n}$ be the set of distinct poles, with   $n\geq 3$ and $m\geq 2$. Let $\ds \beta=\max_{i=\overline{1,m}} d_g(x_0,x_i)$, where $x_0=(0,...,0,1)$ is the north pole of the sphere ${\mathbb{S}}^{n}$ and $g$ is the natural Riemannian metric of $\mathbb S^n$ inherited by $\mathbb R^{n+1}$.   Then we have the following inequality:
	\begin{equation}\label{csakezmegyjol}
	\|u\|^2_{\mathtt{C}(n,\beta)}\geq \frac{(n-2)^2}{m^2}\sum_{1\leq i<j\leq m}\ig{\left|\frac{\nabla_g d_i}{d_i}-\frac{\nabla_g d_j}{d_j}\right|^2u^2},\ \ \forall u\in H^1_g(\mathbb{S}_+^{n}),
	\end{equation}
	where $\ds\|u\|^2_{{\mathtt{C}(n,\beta)}}=\int_{\mathbb{S}_+^{n}}|\nabla_g u|^2\d+\mathtt{C}(n,\beta)\int_{\mathbb{S}_+^{n}} u^2\d$ and $\mathtt{C}(n,\beta)=(n-1)(n-2)\frac{7\pi^2-3\left(\beta+\frac{\pi}{2}\right)^2}{2\pi^2\left(\pi^2-\left(\beta+\frac{\pi}{2}\right)^2\right)}$.
\end{corollary}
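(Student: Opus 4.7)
\emph{Proof plan.} The plan is to specialize the general inequality (\ref{originial-general}) from Theorem \ref{main-main1} to $(M,g) = (\mathbb{S}^n,g)$, with $g$ the round metric of constant sectional curvature $+1$ (a complete manifold), taking test functions supported in the open upper hemisphere, and then to absorb the correction term into the $L^2$-norm by combining the constant-curvature identity for the Laplacian of the distance function with a Mittag-Leffler tail estimate.

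Because $\mathbb{S}^n$ has constant sectional curvature $1$, the two Laplace comparison theorems recalled in Section \ref{fogalmak} (both applied with $c=k_0=1$) hold with equality in the sense of distributions:
\[
\Delta_g d_i = (n-1)\cot(d_i) \quad \text{a.e. on } \mathbb{S}^n.
\]
Consequently the correction term in (\ref{originial-general}) simplifies to $(n-1)(d_i\cot(d_i) - 1)/d_i^2$. Invoking the Mittag-Leffler expansion $t\cot t - 1 = -2t^2 \sum_{k=1}^{\infty}(k^2\pi^2 - t^2)^{-1}$, valid for $t \in (0,\pi)$, I would rewrite this correction as $-2(n-1)\sum_{k\ge 1}(k^2\pi^2 - d_i^2)^{-1}$, a manifestly nonpositive quantity on $\mathbb{S}_+^n$.

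Next I would produce a uniform upper bound on this tail on the hemisphere. For any $x \in \mathbb{S}_+^n$, the triangle inequality together with $d_g(x_0,x) < \pi/2$ yields
\[
d_i(x) \leq d_g(x_0,x_i) + d_g(x_0,x) < \beta + \tfrac{\pi}{2} =: \alpha < \pi.
\]
Splitting the series at $k = 1$, the leading term is controlled by $(\pi^2 - \alpha^2)^{-1}$, while for $k \ge 2$ the crude estimate $k^2\pi^2 - d_i^2 > \pi^2(k^2 - 1)$ combined with the telescoping identity $\sum_{k \ge 2}(k^2 - 1)^{-1} = 3/4$ gives
\[
\sum_{k=1}^{\infty}\frac{1}{k^2\pi^2 - d_i^2(x)} \;\leq\; \frac{1}{\pi^2 - \alpha^2} + \frac{3}{4\pi^2} \;=\; \frac{7\pi^2 - 3\alpha^2}{4\pi^2(\pi^2 - \alpha^2)}.
\]
Multiplying by $\tfrac{2(n-1)(n-2)}{m}$ and summing over the $m$ poles reproduces exactly the constant $\mathtt{C}(n,\beta)$.

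Substituting this bound into (\ref{originial-general}) and rearranging yields the inequality (\ref{csakezmegyjol}) for every $u \in C_0^\infty(\mathbb{S}_+^n)$. The extension to $u \in H_g^1(\mathbb{S}_+^n)$ is by the density of $C_0^\infty(\mathbb{S}_+^n)$ combined with Fatou's lemma on the nonnegative RHS, exactly as in the proof of Corollary \ref{corollary-extension}. The main technical point is fine-tuning the $k=1$ vs.\ $k\ge 2$ splitting so that the arithmetic of the tail bound produces precisely the constant $\mathtt{C}(n,\beta)$ advertised in the statement; once this choice is fixed, the remaining steps are routine.
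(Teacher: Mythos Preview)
Your proposal is correct and follows essentially the same route as the paper: apply Theorem~\ref{main-main1} on $\mathbb{S}^n$, use $\Delta_g d_i=(n-1)\cot d_i$, rewrite the correction via the Mittag--Leffler expansion, bound the $k=1$ term using $d_i<\beta+\tfrac{\pi}{2}$ and the tail $k\ge 2$ via $d_i<\pi$ together with $\sum_{k\ge 2}(k^2-1)^{-1}=\tfrac34$, and finish by density. The only cosmetic difference is that the paper invokes $d_i<\pi$ directly for the $k\ge 2$ bound rather than the sharper $d_i<\alpha$, but the resulting estimate and constant are identical.
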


{\it Proof}. Let $M=\mathbb S^n$ be the standard unit sphere in $\mathbb R^{n+1}$ and the open upper hemisphere $\mathbb S^n_+=\{y=(y_1,...,y_{n+1})\in \mathbb S^n:y_{n+1}>0\}.$ 
By Theorem \ref{main-main1} we have  \begin{align*}\ig{|\nabla_g u|^2}\geq& \frac{(n-2)^2}{m^2}\sum_{1\leq i<j\leq m}\igo{ \left|\frac{\nabla_g d_i}{d_i}-\frac{\nabla_g d_j}{d_j}\right|^2u^2}\\&+\frac{n-2}{m}\sum_{i=1}^{m}\ds\ig{ \frac{d_i\Delta_g d_i-(n-1)}{d_i^2}u^2},\ \forall u\in C_0^\infty(\mathbb S^n_+).\end{align*}
Since ${\bf K}\equiv 1$,  the  two-sided Laplace comparison theorem (or a direct computation) shows that $\Delta_g d_i=(n-1)\cot(d_i).$

Fix $u\in C_0^\infty(\mathbb S_+^n)$.  By  using the Mittag-Leffler expansion of the cotangent function, i.e., $$\cot t=\frac{1}{t}+2t\sum_{k=1}^\infty\frac{1}{t^2-\pi^2k^2},\ t\in (0,\pi),$$
and the fact that $0<d_i<\pi,$  $i\in \{1,...,m\}$ (up to the poles, which has null measure), one has 
$$\ds\ig{ \frac{d_i\Delta_g d_i-(n-1)}{d_i^2}u^2}=-2(n-1)\int_{S_+^n}\sum_{k=1}^\infty\frac{u^2}{\pi^2k^2-d_i^2}{\rm d}v_g.$$		
% Now, $$\sum_{k=1}^\infty\int_{{\mathbb{S}}_+^{n}}\frac{u^2}{\pi^2k^2-d_i^2}\d=\int_{{\mathbb{S}}_+^{n}}\frac{u^2}{\pi^2-d_i^2}+\sum_{k=2}^\infty\int_{{\mathbb{S}}_+^{n}}\frac{u^2}{\pi^2k^2-d_i^2}\d.$$
Since $d_i< \pi$, we get that $$\int_{{\mathbb{S}}_+^{n}}\sum_{k=2}^\infty\frac{u^2}{\pi^2k^2-d_i^2}\d\leq \int_{{\mathbb{S}}_+^{n}}\sum_{k=2}^\infty\frac{u^2}{\pi^2k^2-\pi^2}\d=\frac{3}{4\pi^2}\int_{{\mathbb{S}}_+^{n}}u^2\d.$$
%Therefore it yields that 
%\begin{align}\ig{|\nabla_g u|^2}+\frac{2(n-1)(n-2)}{m}\sum_{i=1}^m\ig{\frac{u^2}{d_i^2-\pi^2}}&\geq\frac{(n-2)^2}{m^2}\sum_{1\leq i<j\leq m}\ig{ u^2\left|\frac{\nabla_g d_i}{d_i}-\frac{\nabla_g d_j}{d_j}\right|^2} \nonumber\\&-\frac{3(n-1)(n-2)}{2\pi^2}\ig{u^2}\label{eztmajdkiemeljuk}. 
%\end{align}	
Moreover, since $\ds \beta=\max_{i=\overline{1,m}}d_g(x_0,x_i)<\frac{\pi}{2},$  one can see that for every $x\in \Sg$, $d_i(x)=d_g(x,x_i)\leq d_g(x,x_0)+d_g(x_0,x_i)< \frac{\pi}{2}+\beta.$
Thus,  $\pi^2-d_i^2> \pi^2-\left(\beta+\frac{\pi}{2}\right)^2>0,$
which implies  $$\ig{\frac{u^2}{\pi^2-d_i^2}}\leq \frac{1}{\pi^2-\left(\beta+\frac{\pi}{2}\right)^2}\ig{u^2}.$$
Combining the above two estimates,  we have that $$\ig{|\nabla_g u|^2}+\mathtt{C}(n,\beta)\ig{u^2}\geq \frac{(n-2)^2}{m^2}\sum_{1\leq i<j\leq m}\ig{ \left|\frac{\nabla_g d_i}{d_i}-\frac{\nabla_g d_j}{d_j}\right|^2u^2},$$ where 
$\mathtt{C}(n,\beta)=(n-1)(n-2)\frac{7\pi^2-3\left(\beta+\frac{\pi}{2}\right)^2}{2\pi^2\left(\pi^2-\left(\beta+\frac{\pi}{2}\right)^2\right)}$.
The latter inequality can be extended to $H_g^1({\mathbb{S}}_+^{n})$ by standard approximation argument.\hfill $\square$\\

For simplicity, let $S=\{x_1,x_2\}\in {\mathbb{S}}_+^{n}$ be the set of poles. We consider the Dirichlet problem 
$$
\left\{
\begin{array}{ll}
\ds-\Delta_{g} u +\mathtt{C}(n,\beta)u=
\lambda u\left|\frac{\nabla_{g} d_1}{d_1}-\frac{\nabla_{g} d_2}{d_2}\right|^2+|u|^{p-2}u, & \hbox{ in } \mathbb{S}_+^{n} \\
u=0, & \hbox{ on } \partial\mathbb{S}_+^{n},
\end{array}
\right.\eqno{(\mathscr{P}_{\mathbb{S}_+^{n}})}
$$
where $g$ is the natural Riemannian structure  on the standard unit sphere $\mathbb S^n$ inherited by $\mathbb R^{n+1}$, 
 $p\in (2,2^*)$, $\lambda \in \left[0,\frac{(n-2)^2}{4}\right)$ is fixed and 
$\mathtt{C}(n,\beta)=(n-1)(n-2)\frac{7\pi^2-3\left(\beta+\frac{\pi}{2}\right)^2}{2\pi^2\left(\pi^2-\left(\beta+\frac{\pi}{2}\right)^2\right)}$; hereafter, $x_0=(0,...,0,1)$ is the north pole of $\mathbb S^n $  and $\beta=\max\{d_{g}(x_0,x_1), d_{g}(x_0,x_2)\}$.  

%By using a similar variational approach as in the previous section, one can prove that problem $(\mathscr{P}_{\mathbb{S}_+^{n}})$ has infinitely many solutions. However, our aim here is a bit different; namely, we are going to give a lower estimate of the number of those sequences of solutions for $(\mathscr{P}_{\mathbb{S}_+^{n}})$ whose elements in different sequences are symmetrically distinct. The idea is to construct certain subspaces of $H^1_{g}(\mathbb{S}_+^{n})$ containing invariant
%functions under specific group actions whenever $n$ is sufficiently large. 

%Our result reads as follows:
\begin{theorem}\label{theorem-application-gomb} Let ${\mathbb{S}}_+^{n}$ be the  open upper hemisphere $(n\geq 3)$,  $S=\{x_1,x_2\}\subset {\mathbb{S}}_+^{n}$ be the set of poles and $p\in (2,2^*)$. The following statements hold: 
\begin{itemize}
	\item[(i)] Problem $(\mathscr{P}_{\Sg})$ has infinitely many weak solutions in $H^1_{g}(\Sg)$.
	In addition, if  $x_1=(a,0,...,0,b)$ and $x_2=(-a,0,...,0,b)$ for some $a,b\in \mathbb R$ with $a^2+b^2=1$ and $b>0,$ then  problem $(\mathscr{P}_{\Sg})$ has a sequence $\{u_k\}_{k\in \mathbb N}$ of distinct weak solutions in $H^1_{g}(\Sg)$ of the form $$u_k:=u_k\left(y_1,\sqrt{y_2^2+...+y_n^2},y_{n+1}\right)=u_k\left(y_1,\sqrt{1-y_1^2-y_{n+1}^2},y_{n+1}\right).$$
	\item[(ii)] If $n=5$ or $n\geq 7,$ and $x_1=(a,0,...,0,b)$, $x_2=(-a,0,...,0,b)$ for some $a,b\in \mathbb R$ with $a^2+b^2=1$ and $b>0,$ then there exists at least $\ds s_n=\left[\frac{n}{2}\right]+(-1)^{n-1}-2$ sequences  of sign-changing weak solutions of $(\mathscr{P}_{\Sg})$ in $H^1_{g}(\Sg)$ whose elements mutually differ by their symmetries. 
\end{itemize}
	
\end{theorem}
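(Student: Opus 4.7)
The plan is a variational attack combined with Palais' principle of symmetric criticality and the Fountain theorem. I will work in the Dirichlet Sobolev space $H^1_{0,g}(\Sg)$, defined as the completion of $C_0^\infty(\Sg)$ in the norm $\|\cdot\|_{\mathtt{C}(n,\beta)}$ of Corollary \ref{corollary-ezleszajoagombon}. Since $\lambda<(n-2)^2/4$ and Corollary \ref{corollary-ezleszajoagombon} applied with $m=2$ yields
$$\|u\|^2_{\mathtt{C}(n,\beta)}\geq \frac{(n-2)^2}{4}\ig{\left|\frac{\nabla_g d_1}{d_1}-\frac{\nabla_g d_2}{d_2}\right|^2 u^2},$$
the quadratic form
$$Q_\lambda(u):=\|u\|^2_{\mathtt{C}(n,\beta)}-\lambda\ig{\left|\frac{\nabla_g d_1}{d_1}-\frac{\nabla_g d_2}{d_2}\right|^2 u^2}$$
defines a norm equivalent to $\|\cdot\|_{\mathtt{C}(n,\beta)}$, and the energy functional
$$\mathcal{E}_\lambda(u)=\frac{1}{2}Q_\lambda(u)-\frac{1}{p}\ig{|u|^p}$$
is $C^1$ and even in $u$, its critical points being precisely the weak solutions of $(\mathscr{P}_{\Sg})$.

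For the first assertion in (i), I will verify that $\mathcal{E}_\lambda$ satisfies the Palais--Smale condition and enjoys symmetric mountain pass / Fountain geometry. The compactness of the Sobolev embedding $H^1_{0,g}(\Sg)\hookrightarrow L^p(\Sg)$ for $p\in(2,2^*)$ on the bounded domain $\Sg$ takes care of the superlinear term, while the bipolar quadratic term is weakly continuous: it is controlled by $\|u\|^2_{\mathtt{C}(n,\beta)}$ via the Hardy-type estimate and becomes weakly continuous after absorbing the singularities at the two poles. The Symmetric Mountain Pass (Fountain) theorem of Ambrosetti--Rabinowitz then yields an unbounded sequence of critical values, hence infinitely many distinct weak solutions.

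For the symmetric form in (i), consider the compact subgroup $G=\{\mathrm{id}_{\R}\}\times O(n-1)\times\{\mathrm{id}_{\R}\}$ acting on $\R^{n+1}$ by rotating the middle coordinates $(y_2,\ldots,y_n)$ and fixing $y_1,y_{n+1}$. Its restriction to $\Sg$ is a group of isometries fixing both poles $x_1=(a,0,\ldots,0,b)$ and $x_2=(-a,0,\ldots,0,b)$; consequently $d_1,d_2$, and therefore $\mathcal{E}_\lambda$, are $G$-invariant. By Palais' principle of symmetric criticality, critical points of $\mathcal{E}_\lambda$ restricted to the fixed-point subspace $H^1_{0,g,G}(\Sg)$ lift to critical points on the full space. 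Since $H^1_{0,g,G}(\Sg)$ consists exactly of the functions of the form $u(y_1,\sqrt{y_2^2+\cdots+y_n^2},y_{n+1})$, reapplying the Fountain theorem on this closed subspace delivers the sequence $\{u_k\}$ of the announced form.

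For part (ii), I would enrich the symmetry apparatus following Bartsch--Willem-type constructions (adapted to the sphere by Krist\'aly and collaborators) to produce sign-changing solutions distinguished by symmetry class. Beyond $G$, the full stabilizer of $\{x_1,x_2\}$ in the isometry group $O(n+1)$ of $\mathbb{S}^n$ also contains the involution $\tau:y_1\mapsto -y_1$, which swaps the poles, as well as orthogonal splittings $\R^{n-1}=\R^{k_1}\oplus\R^{k_2}$ of the middle block. I would select $s_n$ closed subgroups $G^{(j)}=\bigl[O(k_1^{(j)})\times O(k_2^{(j)})\bigr]\rtimes\langle\tau\rangle$ with $k_1^{(j)}+k_2^{(j)}=n-1$, together with non-trivial characters $\chi_j:G^{(j)}\to\{\pm 1\}$ twisting the $\tau$-action, so that the $\chi_j$-equivariant subspace
$$X^{(j)}=\{u\in H^1_{0,g}(\Sg) : u\circ \sigma=\chi_j(\sigma)u \text{ for all } \sigma\in G^{(j)}\}$$
is contained in the set of functions vanishing on $\{y_1=0\}$ and therefore consists entirely of sign-changing elements. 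Palais' principle still applies, because $\mathcal{E}_\lambda$ is invariant under the $\chi_j$-twisted action (the bipolar potential is symmetric in $d_1,d_2$), and the Fountain theorem on each $X^{(j)}$ produces an unbounded sequence of sign-changing weak solutions. The arithmetic $s_n=[n/2]+(-1)^{n-1}-2$ corresponds exactly to the number of admissible partitions $\{k_1,k_2\}$ of $n-1$ (with $k_\ell\geq 2$, the parity correction excising the degenerate symmetric split when $n-1$ is even) giving rise to pairwise non-conjugate isotropy groups within the $\{x_1,x_2\}$-stabilizer. The main obstacle of the whole plan lies precisely here: verifying this non-conjugacy and deducing that solutions from distinct $X^{(j)}$'s cannot be transformed into one another by any isometry of $\Sg$, which is a finite group-theoretic argument on the irreducible block structure of the stabilizer. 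By contrast, the Palais--Smale condition and the mountain-pass geometry are routine once the equivalent-norm structure furnished by Corollary \ref{corollary-ezleszajoagombon} is available.
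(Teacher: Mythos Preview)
Your treatment of part (i) is essentially the paper's own: the even energy functional, Corollary \ref{corollary-ezleszajoagombon} to make $Q_\lambda$ an equivalent norm, compact Sobolev embedding, symmetric mountain pass, and then Palais' principle with the group $\{\mathrm{id}_\R\}\times O(n-1)\times\{\mathrm{id}_\R\}$ fixing both poles. That part is fine.

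In part (ii), however, your group-theoretic machinery does not do what you claim. The paper does \emph{not} use the pole-swapping involution $\tau:y_1\mapsto -y_1$ together with two-block splittings $O(k_1)\times O(k_2)$ of the middle $\R^{n-1}$. It uses the Bartsch--Willem type three-block groups
\[
G^n_j=O(j+1)\times O(n-2j-3)\times O(j+1)\subset O(n-1),
\]
and the involution $\tau_j$ that \emph{swaps the two outer $O(j+1)$ blocks}. The twisted action is $(\widetilde{\tau}_j\zeta)u(x)=-u(\zeta^{-1}\widetilde{\tau}_j^{-1}x)$, so the fixed subspace $H_{G^n_{j,\tau_j}}(\Sg)$ consists of functions that are $G^n_j$-radial and odd under the block swap. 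The decisive property, cited from Krist\'aly \cite{Kristaly-DCDS2009}, is
\[
H_{G^n_{j,\tau_j}}(\Sg)\cap H_{G^n_{k,\tau_k}}(\Sg)=\{0\}\quad (j\neq k),
\]
which is exactly what certifies that the $s_n$ sequences are symmetry-distinguished.

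Your construction lacks this property. If $u\in X^{(j)}\cap X^{(k)}$ for two distinct two-block splittings $\{k_1,k_2\}$ and $\{k_1',k_2'\}$ of $n-1$, then $u$ is invariant under the group generated by $O(k_1)\times O(k_2)$ and $O(k_1')\times O(k_2')$, which is all of $O(n-1)$; combined with oddness in $y_1$ this still leaves every function of the form $u(y_1,|y'|,y_{n+1})$ odd in $y_1$, a nonzero infinite-dimensional space. Hence your $X^{(j)}$'s intersect nontrivially, and the ``finite group-theoretic argument on the irreducible block structure'' you allude to cannot separate them. A symptom of this is your count: the number of unordered partitions $\{k_1,k_2\}$ of $n-1$ with $k_\ell\ge 2$ equals $\lfloor(n-1)/2\rfloor-1$, which agrees with $s_n=[n/2]+(-1)^{n-1}-2$ only for $n$ odd and overshoots by one for every even $n\ge 8$. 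To repair part (ii) you must replace the pole-swap by the block-swapping involution $\tau_j$ acting inside $\R^{n-1}$ and use the three-block groups above; the trivial-intersection result then follows as in \cite{Kristaly-DCDS2009}.
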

%\begin{figure}[H]
%	\centering
%	\includegraphics[scale=0.5]{csoporthatas.jpg}
%\end{figure}
 \textit{Proof.} Fix $\lambda \in \left[0,\frac{(n-2)^2}{4}\right)$ arbitrarily. The energy functional $\mathcal{E}:H^1_{g}(\Sg)\to \R$ associated with  problem $(\mathscr{P}_{\Sg})$ is given by  $$\mathcal{E}(u)=\frac{1}{2}\|u\|_{\mathtt{C}(n,\beta)}^2-\frac{\lambda}{2}\ig{u^2\left|\frac{\nabla_{g} d_1}{d_1}-\frac{\nabla_{g} d_2}{d_2}\right|^2}-\frac{1}{p}\ig{|u|^p}.$$ 
It is clear that $\mathcal{E}\in C^1(H^1_{g}(\Sg), \R)$ and its critical points are precisely the weak solutions of  $(\mathscr{P}_{\Sg})$.

(i)   We notice that the embedding $H_g^1(\mathbb S^n_+)\hookrightarrow L^p(\mathbb S^n_+)$ is compact for every $p\in (2,2^*)$, see e.g. Hebey \cite{Hebey-book}. By means of Corollary \ref{corollary-ezleszajoagombon}, one can prove that the  functional $\mathcal{E}$ satisfies the assumptions of the symmetric version of the mountain pass theorem,  see e.g. Jabri \cite[Theorem 11.5]{Jabri-konyv} or Rabinowitz \cite[Theorem 9.12]{Rabinowitz-konyv}, thus there exists a sequence of distinct critical points of $\mathcal E$ which are weak solutions of problem $(\mathscr{P}_{\Sg})$ in $H^1_{g}(\Sg)$. 

In particular, let $x_1=(a,0,...,0,b)$ and $x_2=(-a,0,...,0,b)$ for some $a,b\in \mathbb R$ with $a^2+b^2=1$ and $b>0.$ We notice that in this case $\beta=d_{g}(x_0,x_1)=d_{g}(x_0,x_2)=\arccos b.$ We shall prove that the energy functional $\mathcal{E}$ is invariant w.r.t. the group $G_0={\rm id}_\R\times  O(n-1)\times {\rm id}_\R$ via the action  $$\zeta u(x)=u(\zeta^{-1}x)$$ for every $u\in H^1_g(\Sg)$,  $\zeta \in G_0$ and $x\in \mathbb S_+^n$. 
First, since $\zeta \in G_0$ is an isometry on $\mathbb R^{n+1}$,   a change of variables easily implies that $$u\mapsto \frac{1}{2}\|u\|_{\mathtt{C}(n,\beta)}^2-\frac{1}{p}\ig{|u|^p}$$  
is $G_0$-invariant. Thus, it remains to focus on the $G_0$-invariance of the  functional $$u\mapsto \ig{u^2\left|\frac{\nabla_g d_1}{d_1}-\frac{\nabla_g d_2}{d_2}\right|^2}.$$
To do this, we  recall that $$\left|\frac{\nabla_g d_1}{d_1}-\frac{\nabla_g d_2}{d_2}\right|^2=\frac{1}{d_1^2}+\frac{1}{d_2^2}-2\frac{\langle \nabla_g d_1, \nabla_g d_2\rangle}{d_1d_2}.$$ 
and  $\nabla_g d_g(\cdot,y)(x)=-\frac{\exp_x^{-1}(y)}{d_g(x,y)}\text{ for every }x,y\in \Sg, \text{ }x\neq y$. According to Udri\c ste  \cite[p. 19]{Udriste-Book94}, one has  
%
%$$\exp_x v=x\cos d_g(x,x_i)+\frac{v}{d_g(x,x_i)}\sin d_g(x,x_i),$$
%or (choosing $v=\exp_x^{-1}x_i$) 
$$ \exp_x^{-1}x_i=\frac{d_i(x_i-x\cos d_i)}{\sin d_i}, \ \ \ \ i\in\{1,2\}, \ x\in \Sg\setminus \{x_i\}.$$
Therefore, \begin{equation}\label{expfontos}\nabla_g d_i(x)=\nabla_g d_g(x,x_i)=-\frac{\exp_x^{-1}(x_i)}{d_i}=\frac{x\cos d_i-x_i}{\sin d_i}, \ \ \ \ i\in\{1,2\}, \ x\in \Sg\setminus \{x_i\}.
\end{equation}
Let $\zeta \in G_0$, $ i\in\{1,2\}$ and $ x\in \Sg\setminus \{x_i\}$ be fixed. Since $\zeta x_i=x_i$, it follows that   $$d_i(\zeta x)=d_g(\zeta x,x_i)=d_g(\zeta x,\zeta x_i)=d_g(x,x_i)=d_i(x),$$ 
and by (\ref{expfontos}), 
\begin{align*}
\langle \nabla_g d_g(\zeta x,x_1), \nabla_g d_g(\zeta x,x_2)\rangle=\langle \nabla_g d_g(x,x_1), \nabla_g d_g( x,x_2)\rangle.
\end{align*}
Summing up the above properties (combined with a trivial change of variable), it follows that the energy functional  $\mathcal E$ is $G_0$-invariant, i.e.,  $\mathcal E(\zeta u)=\mathcal E( u)$ for every $u\in H^1_g(\Sg)$ and $\zeta \in G_0$. 

We now can apply the same variational argument as above for the functional $\mathcal E_0=\mathcal E|_{H_{G_0}(\Sg)}$ where $H_{G_0}(\Sg)=\left\{u\in H^1_{g}(\Sg):{\zeta}u=u \hbox{ for every }{\zeta}\in G_0 \right\}.$ Accordingly, one can find a sequence $\{u_k\}_{k\in \mathbb N}\subset H_{G_0}(\Sg)$ of distinct critical points of $\mathcal E_0$. Moreover, due to the principle of symmetric criticality of Palais \cite{Palais}, the critical points of $\mathcal E_0$ are also critical points for the original energy functional $\mathcal E$, thus weak solutions of problem $(\mathscr{P}_{\Sg})$.  Since $u_k$ are $G_0$-invariant functions, they have the form $u_k:=u_k\left(y_1,\sqrt{y_2^2+...+y_n^2},y_{n+1}\right)=u_k\left(y_1,\sqrt{1-y_1^2-y_{n+1}^2},y_{n+1}\right),$ $k\in \mathbb N$. 

(ii)  Let $n=5$ or $n\geq 7$, and denote by $s_n=\left[\frac{n}{2}\right]+(-1)^{n-1}-2$. (Note that $s_6=0$.) For every $j\in \{1,...,s_n\}$ we define $$G^n_j=\left\{
\begin{array}{ll}
O(j+1)\times O(n-2j-3)\times O(j+1), & \hbox{  if } j\neq \frac{n-3}{2}; \\
O\left(\frac{n-1}{2}\right)\times O\left(\frac{n-1}{2}\right), & \hbox{ if } j=\frac{n-3}{2},
\end{array}
\right.$$
where $O(k)$ is the orthogonal group in $\mathbb R^k.$ 
For a fixed $G^n_j,$ we define the function $\tau_j$ associated to $G^n_j$ as $$\tau_j(\sigma)=\left\{
\begin{array}{ll}
(\sigma_3,\sigma_2,\sigma_1), &\hbox{ if }j\neq \frac{n-3}{2} \hbox{ and } \sigma=(\sigma_1,\sigma_2,\sigma_3)\hbox{ with }\sigma_1,\sigma_2\in\R^{j+1},\sigma_2\in \R^{n-2j-3}; \\
(\sigma_3,\sigma_1), & \hbox{ if }j= \frac{n-3}{2} \hbox{ and }\sigma=(\sigma_1,\sigma_3) \hbox{ with }\sigma_1,\sigma_3 \in \R^{\frac{n-1}{2}}.
\end{array}
\right.$$ 
Note that $\tau_j\notin G^n_j$, $\tau_j G^n_j \tau_j^{-1}=G^n_j$ and $\tau_j^2={\rm id}_{\R^{n-1}}$. Similarly as in Krist\'aly  \cite{Kristaly-DCDS2009}, we introduce the action of the group $$G^n_{j,\tau_j}={\rm id}_\R\times \langle G^n_j,\tau_j\rangle\times {\rm id}_\R \subset O(n+1)$$ on the space $H^1_g(\Sg)$ by
\begin{equation}\label{csoporthatas}\zeta u(x)=u(\zeta^{-1}x), \ \ \ \ (\widetilde{\tau}_j\zeta)u(x)=-u(\zeta^{-1}\widetilde{\tau}_j^{-1} x),
\end{equation}
for every $\zeta \in \widetilde G^n_j={\rm id}_\R\times  G^n_j\times {\rm id}_\R$, $\widetilde{\tau}_j={\rm id}_\R\times \tau_j\times {\rm id}_\R$, $u\in H^1_{g}(\Sg)$ and $x\in \Sg$. We define the subspace of $H^1_g(\Sg)$ containing all the symmetric points w.r.t. the compact group $G^n_{j,\tau_j}$, i.e.,
$$H_{\G}(\Sg)=\left\{u\in H^1_{g}(\Sg):\tilde{\zeta}u=u \hbox{ for every }\tilde{\zeta}\in \G \right\}.$$
Note that (see Krist\'aly \cite[Theorem 3.1]{Kristaly-DCDS2009}) for every $j\neq k\in \{1,2,...,s_n\}$ one has 
%$$\ds H_{\widetilde G^n_j}(\Sg)\cap H_{\widetilde G^n_{k}}(\Sg)=\left\{u\in H^1_g(\Sg):u(y_1,y_2,...,y_n,y_{n+1})=u\left(y_1,\sqrt{y_2^2+...+y_n^2},y_{n+1}\right) \right\}$$
%and 
\begin{equation}\label{metszet-nulla}
\ds H_{\G}(\Sg)\cap H_{G^n_{k,\tau_k}}(\Sg)=\{0\}.
\end{equation}

In a similar way as above, we can prove that the energy functional $\mathcal{E}$ is $\G$-invariant for every $j\in \{1,...,s_n\}$ (note that $\mathcal E$ is an even functional), where the group action on $H^1_{g}(\Sg)$ is given  by (\ref{csoporthatas}). Therefore, for every $j\in \{1,...,s_n\}$ there exists a sequence $\{u_k^j\}_{k\in \mathbb N}\subset H_{\G}(\Sg)$ of distinct critical points of $\mathcal E_j=\mathcal E|_{H_{\G}(\Sg)}$. Again by Palais \cite{Palais}, $\{u_k^j\}_{k\in \mathbb N}\subset H_{\G}(\Sg)$ are distinct critical points also for $\mathcal E$,  thus weak solutions for problem $(\mathscr{P}_{\Sg})$. It is clear that every  $u_k^j$ is sign-changing (see (\ref{csoporthatas})) and according to (\ref{metszet-nulla}), elements in different sequences have mutually different symmetry properties. 
\hfill $\Box$
\begin{remark}\rm 
 For $n=6$ in Theorem 
\ref{theorem-application-gomb} (ii), one has $s_6=0$; therefore, in this case we cannot apply the above group-theoretical argument to guarantee the existence of sign-changing solutions for problem $(\mathscr{P}_{\Sg})$. 
\end{remark}

\section{Concluding remarks}\label{section-final}
 In the present paper we presented some multipolar Hardy inequalities on complete Riemannian manifolds by exploring the presence of the curvature and giving some applications in the theory of  elliptic equations involving bipolar potentials; as far as we know, this is the first study in such a geometrical setting. During the preparation of the manuscript we faced several problems which, - in our opinion, - are worth to be tackled in forthcoming investigations. In the sequel, we shall formulate some of them:

\begin{itemize}
	\item[(a)]  As we already pointed out in Remark \ref{remark-elso} (b), the optimality of $\frac{(n-2)^2}{m^2}$ in (\ref{originial-general}) for generic Riemannian manifolds is not yet understood for $m\geq 3$ which requires further studies. We notice that multipolar inequalities involving non-uniform weights on complete Riemannian manifolds can also be obtained, following Devyver, Fraas and Pinchover \cite{DFP}. 
	
%	Another related issue is the existence of extremal functions in optimal Hardy inequalities. A partial result can be formulated as follows: if $(M,g)$ is an $n$-dimensional Hadamard manifold and there exists a positive extremal function in (\ref{original_on_H1}), then $(M,g)$ is isometric to the Euclidean space $\mathbb R^n$. The proof is based on the fact that the existence of the extremal function implies  $d_i\Delta_g d_i=n-1$ for every $i\in \{1,...,m\}$ (see inequality (\ref{originial-general})). Therefore,  Vol$_g(B_r(x))=\omega_n r^n$ for every $x\in M$ and $r>0$. Now, the equality case in the Bishop-Gromov volume comparison theorem  implies that the Hadamard manifold $(M,g)$ is isometric to the Euclidean space  $\mathbb R^n$. Note that optimality issues in  multipolar Hardy inequalities is not completely understood even in the Euclidean setting; recently, Cazacu \cite{Cazacu2} presented some unexpected phenomena in this issue.   
	
	\item[(b)] For simplicity reasons, in \S \ref{section-appl} we considered only some model elliptic problems with familiar growth assumptions, i.e., sublinear and subcritical pure power term. However,   multipolar Hardy inequalities (cf. Theorems \ref{main-main1} and \ref{masodik}) allow to study other classes of elliptic problems involving other type of nonlinear terms (critical,  concave-convex, etc.). 
	\item[(c)] A challenging problem is to study the heat equation involving multiple poles on strip-like domains or curved tubes (embedded into appropriate Riemannian manifolds). We notice that  in the Euclidean setting such equations have been investigated by Baras and Goldstein \cite{Baras-Golstein},  Krej{\v{c}}i{\v{r}}{\'{\i}}k  and Zuazua
	\cite{Kr-Zuazua-JMPA, Kr-Zuazua-JDE} via Hardy-type inequalities; see also references therein. We notice that deep studies already exist concerning linear heat equations on  Riemannian manifolds having non-negative Ricci curvature which is related to the Perelman's volume non-collapsing result, see Ni \cite{Ni-JGA}. 
	
%	\item[(d)] Having in our mind some highly non-Riemannian structures of Randers type (as the Matsumoto mountain slope metric or the Finsler-Poincar\'e disc model), it would be interesting to extend the inequalities in the present paper to not necessarily reversible Finsler manifolds in the spirit of the paper by Farkas, Krist\'aly and Varga \cite{FKVCalculus}. In such a case, one of the main difficulties is the non-linearity of the Finsler-Laplace operator (which turns to be linear if and only if the Finsler structure is Riemannian). 
\end{itemize}

 \vspace{0.8cm}
 
 \noindent \textbf{Acknowledgment.} The research of Cs. Farkas and A. Krist\'aly is supported by a grant of
 the Romanian National Authority for Scientific Research,
 CNCS-UEFISCDI, project no. PN-II-ID-PCE-2011-3-0241. Both authors are grateful to the Dipartimento di Matematica e Informatica, Universit\`a di Catania for the warm hospitality where this work has been initiated. Cs. Farkas also thanks the financial support of Gruppo Nazionale per l'Analisi Matematica, la Probabilit\`a e le loro Applicazioni (GNAMPA) of the Istituto Nazionale di Alta Matematica (INdAM). The authors thank Professor Enrique Zuazua and the three anonymous referees for their valuable remarks which improved the quality of our manuscript.

\bibliographystyle{amsalpha}
\bibliography{biblio}
\end{document}